\newtheorem{theorem}{Theorem}[section]
\newtheorem{proposition}[theorem]{Proposition}
\newtheorem{lemma}[theorem]{Lemma}
\newtheorem{assumption}[theorem]{Assumption}
\theoremstyle{remark}
\numberwithin{equation}{section}
\begin{document}
	
	\pagenumbering{arabic}	
	\title[ZK system: Stabilization results]{Long-Time Dynamics of the Zakharov--Kuznetsov Equation}
	\author[Capistrano-Filho]{Roberto de A. Capistrano-Filho*}
	\address{Departamento de Matem\'atica,  Universidade Federal de Pernambuco (UFPE), 50740-545, Recife (PE), Brazil.}
	\email{\url{roberto.capistranofilho@ufpe.br}}
	
		\author[Nascimento]{Ailton C. Nascimento}
	\address{Departamento de Matem\'atica, Universidade Federal do Piau\'i (UFPI), 64049-550, Teresina (PI), Brazil.}
	\email{\url{ailton.nascimento@ufpi.edu.br}}
	
	\thanks{*Corresponding author: \url{roberto.capistranofilho@ufpe.br}}
	\thanks{Capistrano–Filho was partially supported by CNPq grant numbers 301744/2025-4, 421573/2023-6, CAPES/COFECUB grant number 88887.879175/2023-00, and PROPG (UFPE) \textit{via} PROAP resources.}
	\subjclass[2010]{35Q53, 93D15, 93D30, 93C20}
	\keywords{ZK system, Delayed system, Damping mechanism, Stabilization}
	%\date{Version 2021-12}
	
	\begin{abstract}
This manuscript presents the results of stabilization for the Zakharov--Kuznetsov equation, a two-dimensional Korteweg--de Vries-type equation. We provide rigorous proofs using two different approaches, showing that when a damping mechanism and an internal delay term (anti-damping) are introduced, the solutions of the  Zakharov--Kuznetsov equation exhibit both local and global exponential stabilization properties. A significant contribution of our work is the determination of the optimal constant and the minimal time required to ensure exponential decay of the energy associated with this two-dimensional system.	\end{abstract}

	\maketitle
	
\section{Introduction}
We consider the Zakharov--Kuznetsov (ZK) equation
\begin{equation}\label{ZKeq}
	\partial_t \zeta + \alpha \partial^{3}_{x} \zeta + \gamma\,\partial_{x}\partial_{y}^{2} \zeta +  \zeta\,\partial_x  \zeta = 0,
	\qquad (x,y)\in \mathbb{R}^2,\ t>0,
\end{equation}
where $ \zeta= \zeta(x,y,t)$ is a real-valued function and $\alpha\neq 0$ is a real parameter and $\gamma>0$ constant.  The three-dimensional version of \eqref{ZKeq} was originally derived by Zakharov and Kuznetsov \cite{ZK1974} to describe weakly magnetized ion-acoustic waves in a strongly magnetized plasma. It is important to note that before the previous article, another study, see \cite{ZK1972}, addressed the same physical regime under the assumption that the dispersion associated with the finite Debye ion radius along the magnetic field could be neglected, which made it possible to obtain an exact Kolmogorov-type solution in the weak turbulence regime. We point out also that the two-dimensional form retains physical relevance: it models the propagation of weakly nonlinear ion-acoustic waves in a plasma composed of cold ions and hot isothermal electrons under a uniform magnetic field (see \cite{MoPa-99,MoPa-00} for further details and \cite{LaLiSa-13} for a rigorous mathematical treatment). 

\subsection{Background}
The ZK equation belongs to a broader class of dispersive equations whose well-posedness theory for the Cauchy problem on the whole plane has been extensively studied; see, for instance, \cite{Faminskii-95, Kinoshita-21, LinPas-09, MolPil-15}.  In the periodic setting, the pioneering work \cite{LinPanRo-19} established local well-posedness of the ZK equation in the Sobolev spaces $H^s(\mathbb{T}\times\mathbb{T})$ for $s>5/3$, relying on a variant of Strichartz-type estimates. This regularity threshold was later improved to $s>1$ in \cite{KinSch-21} using a nonlinear Loomis--Whitney inequality. More recently, Osawa \cite{Osawa-22} proved local well-posedness for the two-dimensional ZK equation on the mixed domain $\mathbb{R}\times\mathbb{T}$, for $9/10 < s < 1$.

There has been significant progress in control theory aimed at understanding how damping mechanisms influence the energy of systems governed by partial differential equations. In particular, the exponential stability of dispersive equations related to water waves on bounded domains has been extensively investigated. For instance, it is well known that the Korteweg-de Vries (KdV) equation~\cite{Zuazua2002}, the Boussinesq system of KdV--KdV type~\cite{Pazoto2008}, the Kawahara equation~\cite{Araruna2012}, among others, are exponentially stable by means of the Compactness--Uniqueness method introduced by J.~L.~Lions~\cite{Lions1988}. Other stabilization results, such as those obtained in~\cite{Cerpa2021} and~\cite{Capistrano2018}, rely on Urquiza's method and the Backstepping approach. In all these works, the stabilization is achieved through damping mechanisms acting either inside the equation or through the boundary.

Recent works \cite{martinez2022,boumediene} established exponential decay for a fifth-order KdV-type equation using the Compactness–Uniqueness method combined with Lyapunov techniques. Exponential stabilization results for KP-II and K-KP-II were also obtained in \cite{Panthee2011, ailton2021}. Moreover, \cite{CF-K-P} proved internal observability and rapid stabilization for the linear model associated with \eqref{ZKeq}, while \cite{Pignotti} addressed a KdV–Burgers equation with delay and damping on the real line.

%More recently, in~\cite{martinez2022,boumediene}, the authors established exponential decay for a fifth-order KdV-type equation using the Compactness--Uniqueness argument combined with Lyapunov techniques. In addition, exponential decay for the KP-II and K-KP-II equations was obtained in~\cite{Panthee2011} and~\cite{ailton2021}\footnote{See also the references therein for further stabilization results concerning KP-II and K-KP-II.}. In both works, the authors proved well-posedness and regularity of the corresponding models and showed that the associated energy decays exponentially when a localized damping term is present. We also mention a recent result concerning the linear equation associated with \eqref{ZKeq} posed on a periodic domain. In \cite{CF-K-P}, using tools from nonharmonic Fourier series, the authors established several internal observability theorems. As a consequence, by applying a duality principle together with a general feedback construction, they obtained various exact controllability results as well as rapid uniform stabilization.  Also, we would mention \cite{Pignotti} which explores the generalized Korteweg-de Vries-Burgers equation incorporating delay feedback and a damping term in the real line.

We also mention that Chen \cite{Chen} proved a boundary Carleman estimate for the ZK equation on bounded domains, yielding a unique continuation property. Later, Chen and Rosier \cite{Chen3} used the flatness approach to obtain null controllability for the linear ZK equation on rectangles with Dirichlet boundary control. Chen also established global approximate controllability for the KdV equation on the torus \cite{Chen1} and, more recently, extended the Agrachev–Sarychev method to the ZK–Burgers equation \cite{Chen2}, proving approximate controllability and irreducibility of the associated stochastic semigroup.

%We point out that Chen \cite{Chen} established a boundary Carleman estimate for the ZK equation posed on a bounded domain, thereby obtaining a unique continuation property for this model. Subsequently, Chen and Rosier \cite{Chen3} studied the linear Zakharov–Kuznetsov equation on a rectangular domain with a left Dirichlet boundary control and, using the flatness approach, proved its null controllability and characterized a space of analytically reachable states. In another development, Chen \cite{Chen1} investigated the KdV equation on the periodic domain and showed that it is globally approximately controllable by a two-dimensional external force, relying on the Agrachev–Sarychev method from geometric control theory. More recently, Chen \cite{Chen2} extended these ideas to the two-dimensional ZK–Burgers equation on the torus, proving its global approximate controllability by a finite-dimensional external force via the same Agrachev–Sarychev approach, and, as an application, establishing the irreducibility of the transition semigroup associated with the corresponding stochastic equation driven by degenerate noise.

The stabilization of multidimensional ZK models on bounded domains has also been significantly advanced by the works of Doronin and Larkin. In \cite{DoLa}, they study the equation on rectangles and strips, establishing spectral properties, identifying critical domain sizes, and proving exponential decay. In \cite{DoLa1}, they provide boundary conditions that remove these size restrictions and extend the analysis to 2D strips and 3D channels. Finally, Larkin \cite{La} addresses the fully three-dimensional case, proving global existence and exponential decay for sufficiently small initial data.
%The study of stabilizing multidimensional ZK models on bounded domains has been significantly advanced by a series of works of Doronin and Larkin. Let us comment on three of them. In \cite{DoLa}, the authors analyze the initial–boundary value problems for the two-dimensional ZK equation posed on bounded rectangles and on a strip, establishing spectral properties of the linearized operator, identifying critical domain sizes, and proving exponential decay of regular solutions for the nonlinear system. In a complementary study \cite{DoLa1}, they investigate the linear two-dimensional ZK equation on rectangles and channel-like strips, providing boundary conditions that eliminate critical size restrictions and extending the analysis to 2D strips and 3D grooves, again obtaining exponential decay of solutions. Finally, Larkin \cite{La} considers the three-dimensional ZK equation on bounded domains, proving global existence and uniqueness of regular solutions and showing exponential decay of the $H^{2}$-norm for sufficiently small initial data, thereby extending these stabilization results to the fully three-dimensional setting.

\subsection{Problem setting}
These contributions underscore the mathematical interest in analyzing the asymptotic behavior of solutions to equations of the form \eqref{ZKeq}.  Inspired by the works~\cite{martinez2022,Chen,boumediene,ailton2021,Panthee2011,Valein}, we study the qualitative properties of the initial--boundary value problem for the Zakharov--Kuznetsov equation posed on the bounded domain $\Omega = (0,L)\times(0,L) \subset \mathbb{R}^{2}$, incorporating localized damping and delay terms, namely:
\begin{equation}\label{eq:ZK}
\begin{cases}
	\begin{aligned} 
		&\partial_t\zeta(x,y,t) + \partial_x\left(\alpha \partial^2_x\zeta(x,y,t) + \gamma\partial^2_y \zeta(x,y,t) + \frac{1}{2}\zeta^2(x,y,t)\right)\\
		&+ a(x,y)\zeta(x,y,t) + b(x,y)\zeta(x,y,t-h)=0,
	\end{aligned} & (x,y)\in \Omega,\ t>0. \\
	\zeta(0,y,t)=\zeta(L,y,t)=\partial_x\zeta(L,y,t) = \partial_y\zeta(L,y,t)=0,& y\in(0,L),\ t\in(0,T), \\
	\zeta(x,L,t)=\zeta(x,0,t)=0,& x\in(0,L),\ t\in(0,T),\\
	\zeta(x,y,0)=\zeta_0(x,y), \quad
	\zeta(x,y,t)= z_0(x,y,t), & (x,y)\in\Omega,\ t\in(-h,0).
\end{cases}
\end{equation}
Here $h>0$ is the time delay,   $\alpha>0$, $\gamma>0$  are real constants.  For our purpose,  let us consider the following assumption.

	\begin{assumption}\label{A1}
		The real functions $a\left(x,y\right)$ and $b\left(x,y\right)$ are nonnegative belonging to $L^\infty(\Omega)$. Moreover, $a(x,y) \geq a_0>0$ is almost everywhere in a nonempty open subset $\omega \subset \Omega$.
	\end{assumption}

Our objective here is to introduce, for the first time, the ZK system incorporating not only a localized damping term $a(x,y)u$, which acts as a feedback--dissipation mechanism (see, for instance, \cite{ailton2021,Roger2025}), but also an \emph{anti-damping} component, known as delay term wich change completly the behavior of the energy. 

In this framework, as mentioned, the feedback term may counterbalance the dissipative effects, so the system no longer ensures a monotonic decay of the total energy. Thus, our goal is to demonstrate that the energy associated with the solutions of system~\eqref{eq:ZK}
\begin{equation}\label{eq:ZKen}
\begin{split}
\mathcal{E}_{\zeta}(t) =&\frac{1}{2} \int_0^L\int_0^L \zeta^2(x,y,t) \,dx\,dy +\frac{h}{2} \int_0^L\int_0^L\int_0^1 b(x,y)\zeta^2(x,y,t-\rho h) \,d\rho\,dx\,dy.
\end{split}
\end{equation}
decays exponentially under appropriate conditions. Precisely, we want to answer the following question:

\vspace{0.2cm}

\textit{Does $ \mathcal{E}_\zeta(t) \rightarrow 0$ as $ t \rightarrow \infty$? If it is the case, can we give the decay rate?}

\subsection{Notations and main results} Before addressing this question, we recall the functional setting required for our analysis.  
Let $\Omega \subset \mathbb{R}^{2}$ be an open domain and let $k \in \mathbb{Z}^{+}$.  
We denote by $H^{k}(\Omega)$ the classical Sobolev space defined by
$$
	H^{k}(\Omega)
	\coloneqq
	\left\{
	f \in L^{2}(\Omega)\;:\;
	\partial_x^{\alpha} f \in L^{2}(\Omega)\ \text{(in the distributional sense)},\ 
	|\alpha| \le k
	\right\},
$$
equipped with the norm
\[
\|f\|_{H^{k}(\Omega)}
=
\left(
\sum_{|\alpha|\le k}
\int_{\Omega}
\bigl|\partial_x^{\alpha} f(x)\bigr|^{2}\,dx
\right)^{1/2}.
\]
We also define $H_{0}^{k}(\Omega)$ as the closure of $C_{0}^{\infty}(\Omega)$ in $H^{k}(\Omega)$. 

Before presenting the first result, let us define some constants. Define $T_{0}$ by
\begin{equation}\label{time}
	T_{0}
	=
	\frac{1}{2\theta}
	\ln\!\left( \frac{2\xi \kappa}{\mu} \right)
	+1,
\end{equation}
where $L>0$, $\xi>1$, $0<\mu<1$,
\[
\theta
=
\frac{3\alpha\eta}{(1+2\eta L)L^{2}},
\qquad
\kappa
=
1+\max\left\{ 2\eta L,\ \frac{\sigma}{\xi} \right\},
\qquad \text{and}\qquad \eta \in \left(0, \frac{\xi-1}{2L(1+2\xi)}\right).\] 
Here, these constants are chosen to satisfy
\[
\frac{2\alpha \eta}{(2+2\eta L)L^{2}}
=
\frac{\sigma}{2h(\xi+\sigma)},
\qquad \text{and}\qquad
\sigma = \xi - 1 - 2L\eta(1+2\xi).
\]
Define also the quantity $T_{\min}>0$ by
\begin{equation}\label{tmin}
T_{\min}
:=
-\frac{1}{\nu}\ln\!\left(\frac{\mu}{2}\right)
+
\left(
	\frac{2\|b\|_{\infty}}{\nu}+1
\right)T_{0},
\end{equation}
with
\[
\nu
=
\frac{1}{T_{0}}
\ln\!\left( \frac{1}{\mu+\varepsilon} \right)
\]
and $T_{0}$ is given by \eqref{time}.

The first result of this manuscript shows that, without imposing any restrictive condition on the length $L$ of the domain and assuming that the weight of the delayed feedback is sufficiently small, the energy \eqref{eq:ZKen} associated with the solution of system \eqref{eq:ZK} is locally stable.
\begin{theorem}[Optimal local stabilization]\label{th:ZKes}
Assume that the functions $a(x,y)$ and $b(x,y)$ satisfy the conditions stated in Assumption~\ref{A1}.  
Then there exist constants $\delta>0$, $r>0$, $C>0$, and $\gamma>0$, depending on  
$T_{\min}$, $\xi$, $L$, and $h$, such that if $\|b\|_{\infty} \leq \delta$, such that for every  
$(\zeta_{0}, z_{0}) \in \mathcal{H} = L^{2}(\Omega) \times L^{2}(\Omega \times (0,1))$  
satisfying $\|(\zeta_{0}, z_{0})\|_{\mathcal{H}} \leq r$,  
the energy of system~\eqref{eq:ZK} satisfies
\[
\mathcal{E}_{\zeta}(t)
\leq
C e^{-\gamma t}\, \mathcal{E}_{\zeta}(0),
\qquad \forall\, t > T_{\min}.
\]
\end{theorem}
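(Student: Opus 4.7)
The plan is to combine a weighted multiplier argument tailored to the operator $\alpha\partial_x^{3}+\gamma\partial_x\partial_y^{2}$ with the classical auxiliary-variable technique (in the spirit of Nicaise--Pignotti and \cite{Valein,Pignotti}) that turns the delay term into a transport equation. Setting $z(x,y,\rho,t):=\zeta(x,y,t-\rho h)$ on $\Omega\times(0,1)$, one has $h\partial_t z+\partial_\rho z=0$ with $z(x,y,0,t)=\zeta(x,y,t)$, so that \eqref{eq:ZK} becomes a delay-free coupled system for $(\zeta,z)\in\mathcal{H}$, on which $\mathcal{E}_{\zeta}$ admits a clean energy identity.

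The second step is to work first on the linearized problem (drop $\tfrac{1}{2}\partial_x(\zeta^{2})$ momentarily) and introduce the Lyapunov functional
\begin{equation*}
V(t)
\;=\;
\tfrac{1}{2}\int_{\Omega}(1+2\eta x)\,\zeta^{2}(x,y,t)\,dx\,dy
\;+\;
\tfrac{h\xi}{2}\int_{\Omega}\!\int_{0}^{1}\!\bigl(\sigma+\rho\,b(x,y)\bigr)\,z^{2}(x,y,\rho,t)\,d\rho\,dx\,dy,
\end{equation*}
whose weights are tuned so that $\mathcal{E}_{\zeta}\le V\le \kappa\,\mathcal{E}_{\zeta}$ with $\kappa$ as in \eqref{time}. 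Testing the $\zeta$-equation against $(1+2\eta x)\zeta$ and integrating by parts using the boundary conditions in \eqref{eq:ZK} produces the dispersive gains $3\alpha\eta\,\|\partial_x\zeta\|_{L^{2}(\Omega)}^{2}+\gamma\eta\,\|\partial_y\zeta\|_{L^{2}(\Omega)}^{2}+\tfrac{\alpha}{2}\|\partial_x\zeta(0,\cdot,t)\|_{L^{2}(0,L)}^{2}$; the damping yields the nonnegative contribution $\int_{\Omega}(1+2\eta x)\,a\,\zeta^{2}$; and the delayed feedback produces the cross product $\int_{\Omega}(1+2\eta x)\,b(x,y)\,\zeta(t)\zeta(t-h)\,dx\,dy$, which is absorbed by the $\partial_\rho$-integration of the second piece of $V$ (exploiting $\partial_t z=-h^{-1}\partial_\rho z$). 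The algebraic identities imposed on $\eta,\xi,\sigma,\mu$ in the statement are tuned precisely so that, after applying the Poincaré inequality $\|\zeta\|_{L^{2}}\le L\,\|\partial_x\zeta\|_{L^{2}}$, the differential inequality closes as $V'(t)\le -2\theta V(t)$ with $\theta$ as in \eqref{time}.

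Third, I reinstate the nonlinear term. Testing $\tfrac{1}{2}\partial_x(\zeta^{2})$ against $(1+2\eta x)\zeta$ and integrating by parts (using $\zeta(0,y,t)=\zeta(L,y,t)=0$) produces the single cubic remainder $-\tfrac{2\eta}{3}\int_{\Omega}\zeta^{3}\,dx\,dy$. In two space dimensions the Gagliardo--Nirenberg inequality $\bigl|\!\int_{\Omega}\zeta^{3}\bigr|\le C\,\|\zeta\|_{L^{2}(\Omega)}\,\|\zeta\|_{H^{1}(\Omega)}^{2}$, combined with the a priori bound $\|\zeta(t)\|_{L^{2}}^{2}\le 2\,\mathcal{E}_{\zeta}(0)\le 2r^{2}$ propagated from the basic energy identity, allows this remainder to be absorbed into the dispersive gains whenever $r$ is small enough. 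The linear estimate then degrades only mildly to $V'(t)\le -\nu V(t)$, with $\nu$ as in \eqref{tmin}.

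Integrating this Gronwall-type inequality and using the equivalence $\mathcal{E}_{\zeta}\le V\le \kappa\,\mathcal{E}_{\zeta}$ gives $\mathcal{E}_{\zeta}(t)\le \kappa e^{-\nu t}\mathcal{E}_{\zeta}(0)$; an additional waiting time $(2\|b\|_{\infty}/\nu+1)T_{0}$ is required to flush the initial delay history $z_{0}$ out of the second piece of $V$, producing the stated bound for $t>T_{\min}$. The main obstacle will be to balance \emph{simultaneously} three competing effects: the anti-damping delay, which forces $\|b\|_{\infty}\le\delta$; the dispersive gain from the weighted multiplier, which dictates the admissible range of $\eta,\xi,\sigma,\mu$; and the cubic nonlinearity, which forces the initial-data radius $r$ to be small. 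A secondary, more book-keeping difficulty is to verify that the algebraic identities underlying \eqref{time}--\eqref{tmin} remain consistent under these combined smallness conditions.
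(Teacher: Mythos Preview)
Your proposal takes a genuinely different route from the paper. The paper does \emph{not} apply a single Lyapunov functional to the original system~\eqref{eq:ZK}. Instead it proceeds in three layers: (i)~it first passes to a \emph{perturbed} system, equation~\eqref{eq:P}, obtained by adding an artificial damping $\xi b(x,y)\zeta$; for $\xi>1$ this auxiliary system has a decreasing energy, and a Lyapunov argument (Proposition~\ref{pr:Pes1}) gives exponential decay of its linear part; (ii)~the decay is then transferred back to the true linear delayed system via a Duhamel-type splitting $\zeta=v+w$ and the smallness of $\|b\|_\infty$ (Proposition~\ref{pr:KPde}), yielding a discrete contraction $\mathcal{E}_\zeta(T_0)\le\mu\,\mathcal{E}_\zeta(0)$; (iii)~finally the nonlinear problem is treated by a second splitting $\zeta=\zeta^1+\zeta^2$ (linear solution plus nonlinear remainder), Gagliardo--Nirenberg, and smallness of $r$, iterated at times $mT_1$. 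The quantity $T_{\min}$ in~\eqref{tmin} arises precisely from this discrete iteration, not from any ``flushing of the delay history'' as you suggest. Your direct one-step Lyapunov approach, absorbing both the anti-damping and the cubic term simultaneously, is closer to what the paper does for the $\mu_i$--system in Theorem~\ref{th:MUlyes} and can in principle be made to work; it buys simplicity and a clean continuous-time inequality. However, several details of your outline need repair: the weight $(\sigma+\rho\,b)$ in your $z$-integral has the wrong monotonicity in~$\rho$ (the paper uses $(1-\rho)$ so that the $\partial_\rho$-integration damps rather than amplifies the history); and the specific constants $\theta,\kappa,\eta,\sigma$ in~\eqref{time}--\eqref{tmin} are calibrated for the paper's perturbed-system-plus-iteration scheme, so your claim that they are ``tuned precisely'' for your functional is not accurate---a direct Lyapunov proof would produce different admissible ranges and would not naturally yield the stated $T_{\min}$.
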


Following the ideas in \cite{martinez2022}, we now derive stability properties for the system introduced below, which we refer to as the $\mu_i$--system. Observe that, by choosing in~\eqref{eq:ZK} the coefficients  $a(x,y)=\mu_{1}a(x,y)$ and $b(x,y)=\mu_{2}a(x,y)$, with $\mu_{1}$ and $\mu_{2}$ real constants, we obtain the system
\begin{equation}\label{eq:MU}
\begin{cases}
	\begin{aligned} 
	&	\partial_t\zeta(x,y,t) + \partial_x\left(\alpha \partial_{x}^2\zeta(x,y,t) +  \gamma\partial^2_y \zeta(x,y,t)	+ \frac{1}{2}\zeta^2(x,y,t)\right)\\& + a(x,y)\left(\mu_1\zeta(x,y,t) + \mu_2 \zeta(x,y,t-h)\right)=0,
	\end{aligned}& (x,y,t)\in \Omega\times\mathbb{R}^+\\
	\zeta(0,y,t)=\zeta(L,y,t)=\partial_x \zeta(L,y,t) = \partial_y\zeta(L,y,t)=0,&y\in(0,L), \ t\in(0,T), \\
	\zeta(x,L,t)=\zeta(x,0,t)=0,& x\in(0,L), \ t\in(0,T),\\
	\zeta(x,y,0)=\zeta_0(x,y),\quad
	\zeta(x,y,t)= z_0(x,y,t), & (x,y)\in \Omega, \ t\in(-h,0).
\end{cases}
\end{equation}
Here, $\mu_{1} > \mu_{2}$ are positive real numbers, and $a(x,y)$ satisfies Assumption~\ref{A1}. We define the total energy associated with~\eqref{eq:MU} by
\begin{equation}\label{eq:MUen}
\begin{split}
\mathcal{E}_{\zeta}(t)
=&\frac{1}{2}
\int_{0}^{L}\int_{0}^{L} \zeta^{2}(x,y,t)\,dx\,dy
+\frac{\xi}{2}
\int_{0}^{L}\int_{0}^{L}\int_{0}^{1}
a(x,y)\,\zeta^{2}(x,y,t-\rho h)\,d\rho\,dx\,dy,
\end{split}
\end{equation}
where $\xi>0$ satisfies
\begin{equation}\label{eq:MUcond}
    h\mu_{2} < \xi < h(2\mu_{1}-\mu_{2}).
\end{equation}

Note that the derivative of the energy \eqref{eq:MUen} satisfies
\begin{equation*}
\begin{aligned}
\frac{d}{dt}\mathcal{E}_{\zeta}(t)
\leq -C
\Bigg(
&
	\int_{0}^{L} \left(\partial_{x}\zeta(0,y,t)\right)^{2}dy
	+ \int_{0}^{L} \left(\partial_{y}\zeta(x,0,t)\right)^{2}dx
\\
&
	+ \int_{0}^{L}\int_{0}^{L} a(x,y)\,\zeta^{2}(x,y,t-h)\,dx\,dy
\Bigg),
\end{aligned}
\end{equation*}
for some constant $C = C(\alpha,\gamma,\mu_{1},\mu_{2},\xi,h)\ge 0$.  
Thus, the function \(a(x,y)\) acts as a feedback-damping mechanism for the linearized problem.

To establish our second and third results, we introduce the following quantities.  
Let $\theta>0$ be chosen so that
\begin{equation}\label{TTheta}
\theta
<
\min\left\{
	\frac{\eta}{(1+2\eta L)L^{2}}
	\left[
		3\alpha 
		- \frac{1}{2} C^{4/3} r^{4/3} L^{10/3}
	\right],
	\ \frac{\xi\sigma}{2h(\xi+\sigma\xi)}
\right\},
\end{equation}
and where the positive constants $\eta$ and $\sigma$ satisfy
\[
\sigma < \frac{2h}{\xi}
\left(
	\mu_{1} - \frac{\mu_{2}}{2} - \frac{\xi}{2h}
\right),
\]
and
\[
\eta <
\min\left\{
	\frac{1}{2L\mu_{2}}
	\left( \frac{\xi}{h} - \mu_{2} \right),
	\,
	\frac{1}{2L\mu_{1} + L\mu_{2}}
	\left(
		\mu_{1} - \frac{\mu_{2}}{2} - \frac{\xi}{2h}(1+\sigma)
	\right)
\right\}.
\]

For system~\eqref{eq:MU}, the analysis is divided into two steps.  
Using an appropriate Lyapunov functional, we first prove that the energy $\mathcal{E}_{\zeta}(t)$ decays exponentially as $t\to\infty$, provided the initial data are sufficiently small.  
This gives our second local stabilization result:

\begin{theorem}[Local stabilization]\label{th:MUlyes}
Let $L>0$. Assume that $a(x,y)\in L^{\infty}(\Omega)$ is non-negative and that condition~\eqref{eq:MUcond} holds.  
Then, there exists
\begin{equation}\label{RRRRR}
    0 < r < \frac{\sqrt[4]{216\alpha^{3}}}{C\,L^{5/2}}
\end{equation}
such that, for every  
\(
\left(\zeta_{0}, z_{0}(\cdot,\cdot,-h(\cdot)))\right)\in\mathcal{H}
\)
satisfying  
\(
\|(\zeta_{0},z_{0}(\cdot,\cdot,-h(\cdot)))\|_{\mathcal{H}} \le r,
\)
the energy defined in~\eqref{eq:MUen} decays exponentially.  
More precisely, there exist constants \(\theta>0\) and \(\kappa>0\) such that
\[
\mathcal{E}_{\zeta}(t) \le \kappa\,\mathcal{E}_{\zeta}(0)\,e^{-2\theta t}, 
\qquad \forall t>0,
\]
where $\kappa = 1+\max\{2\eta L,\sigma\}$ and $\theta$ satisfies \eqref{TTheta}.
\end{theorem}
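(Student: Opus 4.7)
The plan is to prove exponential decay via a Lyapunov functional method. I would introduce the perturbed functional
\begin{equation*}
V(t) \;=\; \mathcal{E}_{\zeta}(t) \;+\; \eta\, V_{1}(t) \;+\; \sigma\, V_{2}(t),
\end{equation*}
where the multiplier--type term is $V_{1}(t) = \int_{0}^{L}\!\int_{0}^{L} x\,\zeta^{2}(x,y,t)\,dx\,dy$ and the weighted delay term is
\begin{equation*}
V_{2}(t) \;=\; \frac{\xi h}{2}\int_{0}^{L}\!\int_{0}^{L}\!\int_{0}^{1} e^{-2\theta\rho h} a(x,y)\,\zeta^{2}(x,y,t-\rho h)\,d\rho\,dx\,dy.
\end{equation*}
The constants $\eta,\sigma$ are as in the statement of the theorem. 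A short direct computation yields the equivalence $\mathcal{E}_{\zeta}(t)\le V(t)\le \kappa\,\mathcal{E}_{\zeta}(t)$ with $\kappa = 1+\max\{2\eta L,\sigma\}$, which explains the prefactor in the conclusion.

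Next, I would differentiate $V$ along smooth solutions of \eqref{eq:MU}. For $\tfrac{d}{dt}\mathcal{E}_{\zeta}$, integration by parts against the dispersive part produces the nonnegative boundary traces $\alpha\int_{0}^{L}(\partial_{x}\zeta(0,y,t))^{2}\,dy$ and $\gamma\int_{0}^{L}(\partial_{y}\zeta(x,0,t))^{2}\,dx$ with the correct sign (dissipative), while the damping/delay terms combine using Young's inequality together with the condition $h\mu_{2}<\xi<h(2\mu_{1}-\mu_{2})$ to yield a negative contribution proportional to $\int a\,\zeta^{2}(x,y,t)\,dx\,dy + \int a\,\zeta^{2}(x,y,t-h)\,dx\,dy$. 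For $\tfrac{d}{dt}V_{1}$, the key point is that the $x$-multiplier against $\alpha\partial_{x}^{3}\zeta + \gamma\partial_{x}\partial_{y}^{2}\zeta$ yields, after integration by parts, the interior dissipation
\begin{equation*}
3\alpha\int_{0}^{L}\!\int_{0}^{L} (\partial_{x}\zeta)^{2}\,dx\,dy \;+\; \gamma\int_{0}^{L}\!\int_{0}^{L} (\partial_{y}\zeta)^{2}\,dx\,dy,
\end{equation*}
modulo boundary traces already controlled. Finally, $\tfrac{d}{dt}V_{2}$ produces an exponential-gain factor together with a boundary term $-\tfrac{\xi}{2}\int a\,\zeta^{2}(x,y,t-h)\,dx\,dy$ of the correct sign.

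The main obstacle is, as expected, the nonlinear convective term. When one multiplies $\partial_{x}(\tfrac{1}{2}\zeta^{2})$ by $x\zeta$ and integrates, the result is the trilinear quantity $-\tfrac{1}{3}\int\zeta^{3}\,dx\,dy$, which must be absorbed into the dispersive dissipation $\int(\partial_{x}\zeta)^{2}$. For this I would use the Gagliardo--Nirenberg--type inequality
\begin{equation*}
\|\zeta\|_{L^{3}(\Omega)}^{3} \;\le\; C\,\|\zeta\|_{L^{2}(\Omega)}^{2}\,\|\nabla\zeta\|_{L^{2}(\Omega)},
\end{equation*}
valid under the homogeneous boundary conditions, together with Young's inequality in the form $ab\le \tfrac{3}{4}a^{4/3}+\tfrac{1}{4}b^{4}$, which after the Poincar\'e estimate $\|\zeta\|_{L^{2}}\le L\|\partial_{x}\zeta\|_{L^{2}}$ produces the factor $\tfrac{1}{2}C^{4/3}r^{4/3}L^{10/3}$ appearing in \eqref{TTheta}. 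The smallness assumption \eqref{RRRRR}, equivalent to $3\alpha - \tfrac{1}{2}C^{4/3}r^{4/3}L^{10/3}>0$, guarantees that the remaining coefficient in front of $\int(\partial_{x}\zeta)^{2}$ is strictly positive after absorption, and this is precisely why the smallness threshold takes the form $r<\sqrt[4]{216\alpha^{3}}/(CL^{5/2})$.

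Combining the three pieces and choosing $\eta,\sigma$ as in the statement, all non-dissipative contributions are absorbed and one obtains $\tfrac{d}{dt}V(t)\le -2\theta\,V(t)$ with $\theta$ satisfying \eqref{TTheta}. A Gronwall argument then gives $V(t)\le V(0)\,e^{-2\theta t}$, and the equivalence $\mathcal{E}_{\zeta}\le V\le \kappa\,\mathcal{E}_{\zeta}$ yields the claimed decay $\mathcal{E}_{\zeta}(t)\le \kappa\,\mathcal{E}_{\zeta}(0)e^{-2\theta t}$. A final density/limiting argument transfers the estimate from smooth data to arbitrary $(\zeta_{0},z_{0})\in\mathcal{H}$ with norm bounded by $r$, provided that a global well-posedness result compatible with the smallness regime has been established; this should follow from a standard fixed-point argument combined with the a priori energy control that the Lyapunov estimate itself furnishes, so that the local solution extends globally and the small-data condition is propagated in time.
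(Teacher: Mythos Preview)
Your proposal is correct and follows essentially the same Lyapunov strategy as the paper: the functional $\mathcal{E}_\zeta+\eta V_1+\sigma V_2$, the $x$-multiplier producing interior dissipation $3\alpha\int(\partial_x\zeta)^2+\gamma\int(\partial_y\zeta)^2$, the Gagliardo--Nirenberg control of the cubic term yielding the threshold~\eqref{RRRRR}, and the final differential inequality are all the same. The only cosmetic difference is that the paper takes the linear weight $(1-\rho)$ in $V_2$ rather than your exponential weight $e^{-2\theta\rho h}$; both are standard and lead to the same conclusion, your choice having the minor advantage that the term $-2\theta V_2$ appears directly upon differentiation.
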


The last result of this work concerns the global behavior of system~\eqref{eq:MU}.   Here, the restriction on the smallness of the initial data is removed.  
To achieve this, we employ the compactness--uniqueness method of J.-L.~Lions~\cite{Lions,Lions1988},   reducing the problem to establishing an \emph{observability inequality} for the nonlinear system.

\begin{theorem}[Global stabilization]\label{th:MUes}
Let $a(x,y)\in L^{\infty}(\Omega)$ satisfy Assumption~\ref{A1}.  
Suppose that $\mu_{1} > \mu_{2}$ satisfies condition~\eqref{eq:MUcond}.  
Then, for every $R>0$, there exist constants $C=C(R)>0$ and $\nu=\nu(R)>0$ such that  
the energy \(\mathcal{E}_{\zeta}(t)\), defined in~\eqref{eq:MUen}, decays exponentially as \(t\to\infty\),  
whenever \(\|(\zeta_{0},z_{0})\|_{\mathcal{H}}\le R\).
\end{theorem}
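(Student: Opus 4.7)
I follow the compactness--uniqueness scheme announced after the statement. The energy identity for \eqref{eq:MU} reads
\[
\mathcal{E}_\zeta(T)+C\int_0^T \Lambda(\zeta,t)\,dt\le \mathcal{E}_\zeta(0),
\]
where
\[
\Lambda(\zeta,t):=\int_0^L |\partial_x\zeta(0,y,t)|^2\,dy+\int_0^L |\partial_y\zeta(x,0,t)|^2\,dx+\int_\Omega a(x,y)\,\zeta^2(x,y,t-h)\,dx\,dy.
\]
Hence it suffices to prove the \emph{nonlinear observability inequality}: for every $R>0$ there exist $T=T(R)>0$ and $C_{\mathrm{obs}}=C_{\mathrm{obs}}(R)>0$ such that
\[
\mathcal{E}_\zeta(0)\le C_{\mathrm{obs}}\int_0^T \Lambda(\zeta,t)\,dt
\]
for every solution of \eqref{eq:MU} with $\|(\zeta_0,z_0)\|_{\mathcal H}\le R$. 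Combined with the dissipation identity, this yields $\mathcal{E}_\zeta(T)\le \kappa\,\mathcal{E}_\zeta(0)$ with $\kappa<1$; since the flow preserves the ball of radius $R$ in $\mathcal H$, iteration over the intervals $[kT,(k+1)T]$ produces the claimed rate $\mathcal{E}_\zeta(t)\le Ce^{-\nu t}\mathcal{E}_\zeta(0)$.

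\textbf{Contradiction argument.} If the observability inequality fails, one extracts a sequence of solutions $\{\zeta^n\}$ with $\|(\zeta_0^n,z_0^n)\|_{\mathcal H}\le R$ and $\mathcal{E}_{\zeta^n}(0)>n\int_0^T\Lambda(\zeta^n,t)\,dt$. Setting $\beta_n:=\sqrt{\mathcal{E}_{\zeta^n}(0)}>0$ and $w^n:=\zeta^n/\beta_n$, each $w^n$ has unit initial energy and solves the ZK system with coefficient $\beta_n$ in front of $w^n\partial_x w^n$, while $\int_0^T\Lambda(w^n,t)\,dt\to 0$. Along a subsequence one of two scenarios occurs: either (i) $\beta_n\to 0$ and the weak limit $w$ solves the linear $\mu_i$-system, or (ii) $\beta_n\to\beta_\infty>0$ and $w$ solves the full nonlinear system. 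In both cases the limit inherits the vanishing constraints $\partial_x w(0,y,t)=\partial_y w(x,0,t)=0$ on $(0,L)\times(0,T)$ and $w\equiv 0$ on $\omega\times(-h,T-h)$.

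\textbf{Strong convergence and unique continuation.} To pass to the limit in $w^n\partial_x w^n=\tfrac12\partial_x(w^n)^2$, I would combine the uniform energy bound with the hidden regularity produced by the multiplier $x\zeta$ (yielding a uniform $L^2(0,T;H^1_{\mathrm{loc}}(\Omega))$ bound on $w^n$) and a bound on $\partial_t w^n$ in a negative Sobolev norm read off from the equation, then apply the Aubin--Lions lemma to deduce strong convergence in $L^2(0,T;L^2_{\mathrm{loc}}(\Omega))$. The principal obstacle is the unique continuation property: any solution of the ZK system on $\Omega\times(0,T)$ with vanishing normal derivatives on $\{x=0\}$ and $\{y=0\}$ and vanishing on the open set $\omega$ where $a\ge a_0$ must be identically zero. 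In case (i) this is precisely the content of Chen's boundary Carleman estimate \cite{Chen}; in case (ii) one absorbs $w\partial_x w$ as a first-order perturbation with bounded coefficient (using the energy bound on $w$) and applies the same Carleman inequality. The delayed feedback causes no additional difficulty, since on the set where $a>0$ the solution $w$ already vanishes, so the term $\mu_2 a(x,y)w(\cdot,\cdot,\cdot-h)$ drops out of the limiting equation. Both cases yield $w\equiv 0$, contradicting $\mathcal{E}_w(0)=1$ and closing the argument.
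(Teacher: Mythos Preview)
Your overall strategy—compactness--uniqueness via a contradiction argument, Aubin--Lions, and unique continuation—is exactly the one the paper uses. The difference lies in the \emph{normalization}, and that difference produces a genuine gap in your final step.

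You set $\beta_n:=\sqrt{\mathcal{E}_{\zeta^n}(0)}$ and $w^n:=\zeta^n/\beta_n$, so that $\mathcal{E}_{w^n}(0)=1$, and you conclude by saying that $w\equiv 0$ ``contradict[s] $\mathcal{E}_w(0)=1$''. But Aubin--Lions only gives you $w^n\to w$ strongly in $L^2(0,T;L^2(\Omega))$; it says nothing about convergence of $w^n(\cdot,\cdot,0)$ in $L^2(\Omega)$. The initial data $(w_0^n,z_0^n/\beta_n)$ are merely bounded in $\mathcal{H}$, hence only weakly convergent, and weak limits can lose norm. So the assertion $\mathcal{E}_w(0)=1$ is unjustified, and the contradiction does not close as written.

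The paper sidesteps this by normalizing instead with $\lambda_n:=\|\zeta^n\|_{L^2(0,T;L^2(\Omega))}$. Then strong convergence in $L^2(0,T;L^2(\Omega))$ \emph{directly} yields $\|v\|_{L^2(0,T;L^2(\Omega))}=1$, which is immediately contradicted by $v\equiv 0$ from unique continuation. To make your normalization work you would need one extra ingredient: the multiplier identity obtained by testing the equation against $(T-t)\zeta$ (cf.\ \eqref{eq:ZKr3}/\eqref{eq:MUobs1}), which gives
\[
\|w_0^n\|_{L^2(\Omega)}^2 \;\le\; \tfrac{1}{T}\|w^n\|_{L^2(0,T;L^2(\Omega))}^2 + C\!\int_0^T\!\Lambda(w^n,t)\,dt,
\]
together with the analogous estimate \eqref{eq:MUobs2} for the delay component. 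Since the observation terms vanish, this forces $\|w^n\|_{L^2(0,T;L^2(\Omega))}$ to stay bounded away from zero, and \emph{then} $w\equiv 0$ is a contradiction. Without this step—or the paper's alternative normalization—your argument is incomplete.
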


\subsection{Outline of the article} We conclude the introduction by outlining the structure of the manuscript. 
The paper is organized as follows. In Section~\ref{Sec1}, we revisit the well-posedness theory for the systems considered in this work. Section~\ref{Sec2} is devoted to establishing our first, and optimal, local stability result, namely Theorem~\ref{th:ZKes}. In Section~\ref{Sec3}, we prove the exponential stability stated in Theorem~\ref{th:MUlyes} for the energy associated with the $\mu_i$--system~\eqref{eq:MU}. In the same section, we further extend this local stability to a global result by proving Theorem~\ref{th:MUes}. Finally, further comments are presented in Section~\ref{Sec4}.

\section{Well-Posedness: An overview}\label{Sec1}In this section, we examine the $\mu_i$--system~\eqref{eq:MU}, which plays a fundamental role in obtaining the stabilization results for~\eqref{eq:ZK}. Since the arguments involved are classical, we present only the main statements and outline the essential ideas of the proofs.

\subsection{Linear system} In this section, we use semigroup theory to obtain well-posedness results for the linear ZK system associated with~\eqref{eq:MU}.  
To this end, consider the auxiliary function
\[
z(x,y,\rho,t)=\zeta(x,y,t-\rho h), \quad (x,y)\in\Omega,\ \rho\in(0,1),\ t>0,
\]
which satisfies the transport equation
\begin{equation}\label{eq:ZKtp}
	\begin{cases}
		h\partial_t z(x,y,\rho,t)+\partial_{\rho}z(x,y,\rho,t)=0, & (x,y)\in\Omega,\ \rho\in(0,1),\ t>0,\\[0.3em]
		z(x,y,0,t)= \zeta(x,y,t), & (x,y)\in\Omega,\ t>0,\\[0.3em]
		z(x,y,\rho,0)=z_0(x,y,\rho,-\rho h), & (x,y)\in\Omega,\ \rho\in(0,1).
	\end{cases}
\end{equation}

Consider $\mathcal{H}=L^2(\Omega)\times L^2(\Omega\times(0,1))$ be the Hilbert space equipped with the inner product
\begin{equation*}
	\begin{split}
		\langle ( \zeta,z),(v,w)\rangle_{\mathcal{H}}
		=&\int_\Omega  \zeta(x,y)v(x,y)\,dx\,dy+\xi\|a\|_\infty\int_\Omega\int_0^1 z(x,y,\rho)w(x,y,\rho)\,d\rho\,dx\,dy,
	\end{split}
\end{equation*}
where $\xi$ satisfies~\eqref{eq:MUcond}. To study well-posedness in the Hadamard sense, we rewrite the linear system as an abstract Cauchy problem.  Let $U(t)=(\zeta(\cdot,\cdot,t), z(\cdot,\cdot,\cdot,t))$ and denote $z(1):=z(x,y,1,t)$.  From the linearized ZK system and~\eqref{eq:ZKtp}, we obtain
\begin{equation}\label{eq:ZKlin1}
	\left\{
	\begin{aligned}
		&\partial_t \zeta(x,y,t)
		+ \alpha\,\partial_x^3 \zeta(x,y,t)
		+ \gamma\,\partial_x\partial_y^2 \zeta(x,y,t)
		+ a(x,y)\big(\mu_1  \zeta(x,y,t) + \mu_2 z(1)\big) = 0,
		\\[0.4em]
		&\zeta(0,y,t)=\zeta(L,y,t)=\partial_x \zeta(L,y,t)=\partial_y\zeta(L,y,t)=0,
		\quad y\in(0,L),
		\\[0.4em]
		&\zeta(x,0,t)=\zeta(x,L,t)=0,
		\quad x\in(0,L),
		\\[0.4em]
		&\zeta(x,y,0)=\zeta_0(x,y),
		\quad (x,y)\in\Omega,
		\\[0.4em]
		&h\,\partial_t z(x,y,\rho,t) + \partial_{\rho} z(x,y,\rho,t) = 0,
		\quad (x,y)\in\Omega,\ \rho\in(0,1),\ t>0,
		\\[0.4em]
		&z(x,y,0,t)=\zeta(x,y,t),
		\quad (x,y)\in\Omega,\ t>0,
		\\[0.4em]
		&z(x,y,\rho,0)=z_0(x,y,\rho,-\rho h),
		\quad (x,y)\in\Omega,\ \rho\in(0,1).
	\end{aligned}
	\right.
\end{equation}
This system can be written abstractly as
\begin{equation}\label{eq:ZKabs}
	\begin{cases}
		\dfrac{d}{dt}U(t)=AU(t),\\[0.3em]
		U(0)=(\zeta_0,z_0(x,y,-\rho h)),
	\end{cases}
\end{equation}
where $A\colon D(A)\subset\mathcal{H}\to\mathcal{H}$ is defined by
\begin{equation}\label{eq:ZKA}
	A(\zeta,z)=\left(
	-\alpha\,\partial_x^3\zeta - \gamma\,\partial_x\partial_y^2\zeta - a(x,y)(\mu_1\zeta+\mu_2z(1)),
	-\,h^{-1}\partial_{\rho}z
	\right),
\end{equation}
with dense domain
\begin{equation*}
	D(A)=
	\left\{
	\begin{aligned}
		&(\zeta,z)\in\mathcal{H}:\ \zeta\in H^3(\Omega),\ \partial_{\rho}z\in L^2(\Omega\times(0,1)),\\
		&\zeta(0,y)=\zeta(L,y)=\zeta(x,0)=\zeta(x,L)=0,\\
		&\partial_x\zeta(L,y)=\partial_y\zeta(L,y)=0,\\
		&z(x,y,0)=\zeta(x,y)
	\end{aligned}
	\right\}.
\end{equation*}
Taking into account these definitions, we have the following results.
\begin{lemma}
	The operator $A$ is closed, and its adjoint $A^\ast\colon D(A^\ast)\subset\mathcal{H}\to\mathcal{H}$ is given by
	\begin{equation*}
		A^\ast(\zeta,z)=\left(
		\alpha\,\partial_x^3\zeta+\gamma\,\partial_x\partial_y^2\zeta - a(x,y)\mu_1\zeta+\frac{\xi\|a\|_\infty}{h}z(\cdot,\cdot,0),
		\,h^{-1}\partial_{\rho}z
		\right),
	\end{equation*}
	with dense domain
	\begin{equation*}
		D(A^\ast)=
		\left\{
		\begin{aligned}
			&(\zeta,z)\in\mathcal{H}:\ \zeta\in H^3(\Omega),\ \partial_{\rho}z\in L^2(\Omega\times(0,1)),\\
			&\zeta(0,y)=\zeta(L,y)=\zeta(x,0)=\zeta(x,L)=0,\\
			&\partial_x\zeta(L,y)=\partial_y\zeta(L,y)=0,\\
			&z(x,y,1)=-\dfrac{a(x,y)h\mu_2}{\xi\|a\|_\infty}\zeta(x,y)
		\end{aligned}
		\right\}.
	\end{equation*}
\end{lemma}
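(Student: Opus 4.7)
The proof splits into two essentially independent parts: verifying that $A$ is closed and computing $A^{\ast}$ by integration by parts. I would first check that $D(A)$ is dense in $\mathcal{H}$ (tensor products of test functions compatible with the constraint $z(x,y,0)=\zeta(x,y)$ suffice), so that the adjoint is well defined. For closedness, take $(\zeta_{n},z_{n})\in D(A)$ with $(\zeta_{n},z_{n})\to(\zeta,z)$ and $A(\zeta_{n},z_{n})\to(f,g)$ in $\mathcal{H}$; the second component forces $\partial_{\rho}z=-hg\in L^{2}(\Omega\times(0,1))$, so $z\in C([0,1];L^{2}(\Omega))$ and the compatibility condition $z(x,y,0)=\zeta(x,y)$ passes to the limit via continuity of the trace $\rho\mapsto z(\cdot,\cdot,\rho)$. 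The first component then gives $-\alpha\partial_{x}^{3}\zeta-\gamma\partial_{x}\partial_{y}^{2}\zeta\in L^{2}(\Omega)$, and combined with $\zeta\in L^{2}(\Omega)$ and the four Dirichlet-type traces (which are preserved in the limit by continuity of the trace maps in the appropriate Sobolev scales), one recovers $\zeta\in H^{3}(\Omega)$ and all the boundary conditions defining $D(A)$.

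\textbf{Computation of $A^{\ast}$.} For $(\zeta,z)\in D(A)$ and a test pair $(\tilde{\zeta},\tilde{z})$ taken in the candidate domain, I expand $\langle A(\zeta,z),(\tilde{\zeta},\tilde{z})\rangle_{\mathcal{H}}$ and integrate by parts component by component. Three integrations by parts in $x$ applied to $-\alpha\partial_{x}^{3}\zeta\cdot\tilde{\zeta}$ produce the term $\alpha\int_{\Omega}\zeta\,\partial_{x}^{3}\tilde{\zeta}\,dx\,dy$ plus boundary contributions; these vanish exactly when $\tilde{\zeta}$ satisfies the stated Dirichlet and Neumann traces, since the unrestricted boundary data of $\zeta$ (namely $\partial_{x}^{2}\zeta$ at $x=0,L$ and $\partial_{x}\zeta$ on the free side) must be paired with vanishing adjoint traces. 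For the mixed term $-\gamma\partial_{x}\partial_{y}^{2}\zeta\cdot\tilde{\zeta}$, the boundary contributions from the $x$-integration disappear because $\zeta(0,y)=\zeta(L,y)=0$ implies $\partial_{y}^{2}\zeta(0,y)=\partial_{y}^{2}\zeta(L,y)=0$, and the two $y$-integrations produce no surface terms because $\tilde{\zeta}(x,0)=\tilde{\zeta}(x,L)=0$. The damping term $-a\mu_{1}\zeta$ is symmetric and contributes $-a\mu_{1}\tilde{\zeta}$ to $A^{\ast}\tilde{\zeta}$.

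\textbf{Coupling of the delay and the transport.} The most subtle part is the interaction between the delayed feedback $-a\mu_{2}z(1)\tilde{\zeta}$ and the transport component. One integration by parts in $\rho$ applied to
\[
\xi\|a\|_{\infty}\!\int_{\Omega}\!\int_{0}^{1}\!\bigl(-h^{-1}\partial_{\rho}z\bigr)\tilde{z}\,d\rho\,dx\,dy
\]
yields $\frac{\xi\|a\|_{\infty}}{h}\int_{\Omega}\!\int_{0}^{1} z\,\partial_{\rho}\tilde{z}\,d\rho\,dx\,dy$ (which matches the second component of $A^{\ast}$) plus the boundary remainder
\[
-\frac{\xi\|a\|_{\infty}}{h}\int_{\Omega}\bigl(z(1)\tilde{z}(1)-\zeta(x,y)\tilde{z}(\cdot,\cdot,0)\bigr)\,dx\,dy,
\]
after invoking $z(x,y,0)=\zeta(x,y)$. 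The contribution $\frac{\xi\|a\|_{\infty}}{h}\zeta\tilde{z}(\cdot,\cdot,0)$ matches the $L^{2}(\Omega)$ pairing and so appears as an extra summand in the first component of $A^{\ast}\tilde{\zeta}$. The remaining contribution involving $z(1)$ must cancel with $-\int_{\Omega}a\mu_{2}z(1)\tilde{\zeta}\,dx\,dy$ for every admissible $z(1)\in L^{2}(\Omega)$, which forces precisely the transmission condition $\tilde{z}(x,y,1)=-\frac{a(x,y)h\mu_{2}}{\xi\|a\|_{\infty}}\tilde{\zeta}(x,y)$ appearing in $D(A^{\ast})$.

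\textbf{Maximality of the domain.} To establish that $D(A^{\ast})$ is \emph{exactly} the stated set, I would verify the reverse inclusion by the usual procedure: if $(\tilde{\zeta},\tilde{z})\in\mathcal{H}$ is such that the map $(\zeta,z)\mapsto\langle A(\zeta,z),(\tilde{\zeta},\tilde{z})\rangle_{\mathcal{H}}$ is $\mathcal{H}$-continuous on $D(A)$, then testing first against $(\zeta,z)$ compactly supported in the interior yields the distributional formulas for $A^{\ast}(\tilde{\zeta},\tilde{z})$, hence the regularity $\tilde{\zeta}\in H^{3}(\Omega)$ and $\partial_{\rho}\tilde{z}\in L^{2}$. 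Varying the admissible boundary data of $\zeta$ (through $\partial_{x}^{2}\zeta$ at $x=0,L$ and $\partial_{x}\zeta$ on the relevant face) recovers the Dirichlet and Neumann traces of $\tilde{\zeta}$, and varying $z(1)$ recovers the transmission identity for $\tilde{z}(1)$. The main obstacle is precisely this careful bookkeeping of boundary terms and the joint use of the constraints $z(0)=\zeta$ and the transmission condition, which encode the delay--transport coupling; once these are handled, the adjoint formula and the closedness of $A$ follow from standard functional-analytic facts.
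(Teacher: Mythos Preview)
The paper states this lemma without proof (the authors note at the outset of Section~2 that the well-posedness arguments are classical and only the main statements are recorded). Your proposal supplies exactly the standard computation one would perform---integration by parts in $x$, $y$, and $\rho$, followed by matching the free boundary data of $(\zeta,z)\in D(A)$ against vanishing adjoint traces, with the transmission condition on $\tilde z(\cdot,\cdot,1)$ arising from the cancellation between $-\int_\Omega a\mu_2 z(1)\tilde\zeta$ and the $\rho=1$ contribution of the transport part---and the argument is correct in structure and detail.

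One small caution: when you assert that the third-order boundary terms ``vanish exactly when $\tilde\zeta$ satisfies the stated Dirichlet and Neumann traces,'' carry out the bookkeeping explicitly rather than deferring to the lemma's statement. For KdV-type operators the adjoint Neumann condition $\partial_x\tilde\zeta=0$ typically falls on the \emph{opposite} endpoint from the direct one (here $x=0$, since $\partial_x\zeta(0,y)$ is the free datum in $D(A)$), so your computation should confirm the side recorded in the lemma rather than take it for granted.
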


Moreover, the operator $A$ generates a semigroup. 
\begin{proposition}\label{pr:ZKdis}
	Assume that $a\in L^\infty(\Omega)$ is nonnegative and that~\eqref{eq:MUcond} holds.  
	Then $A$ is the infinitesimal generator of a $C_0$-semigroup on $\mathcal{H}$.
\end{proposition}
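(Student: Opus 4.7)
The approach is the Lumer--Phillips theorem applied to $A-cI$ for a suitable shift $c\geq 0$. Because the inner product on $\mathcal{H}$ weighs the $z$--component by the constant $\xi\|a\|_\infty$ while the damping acts through the pointwise weight $a(x,y)$, the operator $A$ is only quasi-dissipative rather than dissipative; a harmless shift recovers the standard framework. I therefore need to verify (i)~dissipativity of $A-cI$ on $\mathcal{H}$ and (ii)~surjectivity of $\lambda I-A$ onto $\mathcal{H}$ for some $\lambda>c$.

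For (i) I would compute $\langle AU,U\rangle_{\mathcal{H}}$ for $U=(\zeta,z)\in D(A)$. Integration by parts in $x$ and $y$ against $\zeta$, using $\zeta|_{\partial\Omega}=0$ together with $\partial_x\zeta(L,\cdot)=\partial_y\zeta(L,\cdot)=0$, turns the dispersive operator into the non-positive boundary contributions $-\tfrac{\alpha}{2}\int_0^L(\partial_x\zeta(0,y))^2\,dy-\tfrac{\gamma}{2}\int_0^L(\partial_y\zeta(0,y))^2\,dy$, while integration in $\rho$ against $z$ (using $z(\cdot,\cdot,0)=\zeta$) yields $\tfrac{\xi\|a\|_\infty}{2h}\int_\Omega(\zeta^2-z(1)^2)\,dxdy$. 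The cross term $-\mu_2 a(x,y)z(1)\zeta$ is split via Young's inequality into $\tfrac{\mu_2}{2}a\zeta^2+\tfrac{\mu_2}{2}az(1)^2$; bounding $a\leq\|a\|_\infty$ in the delayed piece and using the left inequality $h\mu_2<\xi$ from \eqref{eq:MUcond} renders the coefficient of $\int_\Omega z(1)^2\,dxdy$ non-positive. The remaining positive $\zeta^2$--term absorbs into $c\,\|U\|_{\mathcal{H}}^2$ with $c:=\xi\|a\|_\infty/(2h)$, giving $\langle(A-cI)U,U\rangle_{\mathcal{H}}\leq 0$.

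For (ii), given $(f,g)\in\mathcal{H}$ and $\lambda>c$, the transport part $\lambda z+h^{-1}\partial_\rho z=g$ with coupling $z(\cdot,\cdot,0)=\zeta$ integrates explicitly as
\[
z(x,y,\rho)=\zeta(x,y)\,e^{-\lambda h\rho}+h\int_0^\rho e^{-\lambda h(\rho-s)}g(x,y,s)\,ds,
\]
so that $z(1)$ becomes an affine function of $\zeta$ plus a datum in $L^2(\Omega)$. Substitution reduces the first component to the stationary boundary value problem $\alpha\partial_x^3\zeta+\gamma\partial_x\partial_y^2\zeta+\bigl(\lambda+a(\mu_1+\mu_2 e^{-\lambda h})\bigr)\zeta=\widetilde{f}$ on $\Omega$ with the mixed Dirichlet/Neumann conditions of $D(A)$. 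Unique solvability in $H^3(\Omega)$ follows from the stationary ZK theory on rectangles of Doronin--Larkin \cite{DoLa,DoLa1}, whose $L^2$--coercivity is the stationary analogue of the Step~(i) computation (multiplying by $\zeta$ and using $\lambda>c$ yields $\lambda\|\zeta\|_{L^2}^2\leq\|\widetilde f\|_{L^2}\|\zeta\|_{L^2}$ together with control of the boundary traces at $x=0,y=0$). Reconstructing $z$ via the displayed formula places $(\zeta,z)\in D(A)$, so $\lambda I-A$ is surjective. Lumer--Phillips then provides a contraction $C_0$--semigroup generated by $A-cI$, and consequently $A$ generates a $C_0$--semigroup on $\mathcal{H}$ with $\|e^{tA}\|\leq e^{ct}$.

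I expect the main obstacle to be the construction of the $H^3(\Omega)$--solution in Step~(ii): although the coercive $L^2$--estimate is automatic from the dissipativity computation, the asymmetric boundary data (Dirichlet on all of $\partial\Omega$, Neumann only on the face $\{x=L\}$) force either a carefully chosen Galerkin basis respecting those traces or a direct appeal to the stationary ZK theory on rectangles; the dissipativity calculation itself is clean, and the explicit resolution of the transport equation sidesteps any fixed-point argument.
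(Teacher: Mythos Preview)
Your dissipativity computation in Step~(i), including the shift $c=\xi\|a\|_\infty/(2h)$, coincides with the paper's. The difference is in how you close the Lumer--Phillips argument. You verify the range condition for $\lambda I-A$ directly: the transport equation in $\rho$ is integrated explicitly, and substitution reduces the problem to a stationary third-order ZK boundary value problem on the rectangle, whose solvability you attribute to the Doronin--Larkin theory. The paper instead uses the corollary form of Lumer--Phillips: since the adjoint $A^*$ has already been identified in the preceding lemma, it simply notes that $A-cI$ is closed and densely defined and that both $A-cI$ and $(A-cI)^*$ are dissipative (the adjoint calculation being symmetric to yours), which suffices without ever touching the resolvent equation. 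Your route is self-contained and constructive, and the resolvent formula you write is correct; the paper's route is shorter because it recycles the adjoint computation and sidesteps exactly the $H^3(\Omega)$--solvability of the stationary problem that you correctly identify as the only non-routine step.
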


\begin{proof}
	Let $U=(\zeta,z)\in D(A)$, where $A$ is given by \eqref{eq:ZKA}. 
	Multiplying by $U$ in $\mathcal{H}$ and integrating by parts, we obtain
$$
		\langle AU,U\rangle_{\mathcal{H}}\le \frac{\xi\|a\|_\infty}{2h}\int_\Omega  \zeta^2(x,y)\,dx\,dy.
$$
	Hence, for $\lambda=\frac{\xi\|a\|_\infty}{2h}$ we have $\langle(A-\lambda I)U,U\rangle_{\mathcal{H}}\le0$.  
	Since $A-\lambda I$ is densely defined, closed, and both $A-\lambda I$ and $(A-\lambda I)^\ast$ are dissipative, the Lumer–Phillips theorem ensures that $A$ generates a $C_0$-semigroup on $\mathcal{H}$.
\end{proof}

The next theorem is a direct consequence of semigroup theory. 
\begin{theorem}
	Assume that $a\in L^\infty(\Omega)$ and~\eqref{eq:MUcond} holds.  
	Then, for each initial data $U_0\in\mathcal{H}$ there exists a unique mild solution $U\in C([0,\infty),\mathcal{H})$ for system~\eqref{eq:ZKabs}.  
	Moreover, if $U_0\in D(A)$, the solution is classical $U\in C([0,\infty),D(A))\cap C^1([0,\infty),\mathcal{H}).$
\end{theorem}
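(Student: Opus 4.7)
The plan is to invoke standard linear $C_0$-semigroup theory directly, since the hard analytical work has already been completed in Proposition~\ref{pr:ZKdis}. That proposition establishes, via Lumer--Phillips applied to $A-\lambda I$ (with $\lambda=\xi\|a\|_\infty/(2h)$), that $A$ generates a $C_0$-semigroup $\{S(t)\}_{t\geq 0}$ on $\mathcal{H}$. Once a generator is in hand, the existence, uniqueness, and regularity statements are routine corollaries of the Hille--Yosida/Lumer--Phillips framework; see, for instance, the classical text by Pazy.

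More concretely, first I would define the candidate solution by $U(t):=S(t)U_0$. For arbitrary $U_0\in\mathcal{H}$, the strong continuity of the semigroup directly yields $U\in C([0,\infty),\mathcal{H})$, and this is what is meant by a mild solution of \eqref{eq:ZKabs}. Uniqueness at the mild level follows from the semigroup property together with density of $D(A)$: any two mild solutions with the same initial data coincide on $D(A)$ by the classical derivation, and extend by continuity to all of $\mathcal{H}$.

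Second, for $U_0\in D(A)$, I would recall the standard fact that $D(A)$ is invariant under $S(t)$, that $t\mapsto S(t)U_0$ is Lipschitz into $D(A)$ (endowed with the graph norm) and differentiable into $\mathcal{H}$, with
\[
\frac{d}{dt}S(t)U_0 \;=\; A\,S(t)U_0 \;=\; S(t)\,AU_0.
\]
This gives $U\in C([0,\infty),D(A))\cap C^{1}([0,\infty),\mathcal{H})$ and shows that $U$ solves \eqref{eq:ZKabs} in the classical (pointwise) sense. Uniqueness in this stronger class follows \emph{a fortiori} from uniqueness of the mild solution.

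Since everything reduces to quoting standard consequences of Proposition~\ref{pr:ZKdis}, no essential obstacle remains; the only point worth double-checking is that the coupling between the ZK component and the transport component built into $D(A)$ (through the compatibility condition $z(x,y,0)=\zeta(x,y)$) is preserved under the semigroup, but this is automatic once $A$ has been shown to be closed on the given domain.
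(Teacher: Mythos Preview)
Your proposal is correct and matches the paper's approach exactly: the paper simply states that the theorem ``is a direct consequence of semigroup theory'' and gives no further argument, relying on Proposition~\ref{pr:ZKdis} precisely as you do. Your write-up is in fact more detailed than what the paper provides.
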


Now, let us present the following derivative formula of the energy. 
\begin{proposition}
	Let $a\in L^\infty(\Omega)$ be nonnegative and assume that~\eqref{eq:MUcond} holds.  
	Then, for any mild solution of~\eqref{eq:ZKabs}, the energy $\mathcal{E}_\zeta$, defined by~\eqref{eq:MUen}, is non-increasing and there exists $C>0$ such that
	\begin{equation}\label{eq:ZKendis}
		\begin{split}
			\frac{d}{dt}\mathcal{E}_\zeta(t)
			\le -C\Big(
			&\int_0^L (\partial_x\zeta(0,y,t))^2\,dy
			+\int_0^L (\partial_y\zeta(0,y,t))^2\,dy\\
			&+\int_\Omega a(x,y)\zeta^2\,dx\,dy
			+\int_\Omega a(x,y)\zeta^2(x,y,t-h)\,dx\,dy
			\Big),
		\end{split}
	\end{equation}
	where
	\[
	C=\min\left\{
	\frac{\alpha}{2},\frac{\gamma}{2},
	\mu_1-\frac{\mu_2}{2}-\frac{\xi}{2h},
	-\frac{\mu_2}{h}+\frac{\xi}{2h}
	\right\}.
	\]
\end{proposition}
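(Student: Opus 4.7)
My plan is to establish the inequality first for classical solutions $U_{0}\in D(A)$, since Proposition~\ref{pr:ZKdis} guarantees $U=(\zeta,z)\in C([0,\infty),D(A))\cap C^{1}([0,\infty),\mathcal{H})$ and hence enough regularity for every term in \eqref{eq:ZKendis} to be defined pointwise in $t$. The inequality then extends to mild solutions by a standard density argument together with continuity of the $C_{0}$-semigroup.

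The core calculation begins from
\[
\frac{d}{dt}\mathcal{E}_{\zeta}(t) = \int_{\Omega} \zeta\,\partial_{t}\zeta \, dxdy + \xi\int_{\Omega}\int_{0}^{1} a(x,y)\,z\,\partial_{t}z\, d\rho\,dxdy.
\]
Substituting the linearized equation $\partial_{t}\zeta = -\alpha\partial_{x}^{3}\zeta - \gamma\partial_{x}\partial_{y}^{2}\zeta - a(\mu_{1}\zeta + \mu_{2}z(1))$ into the first integral and carrying out the standard ZK multiplier computation (two integrations by parts in $x$ for the $\alpha$-term and a combined $x,y$-integration by parts for the mixed $\gamma$-term), together with the full set of boundary conditions, produces the surface contribution $-\tfrac{\alpha}{2}\int_{0}^{L}(\partial_{x}\zeta(0,y,t))^{2}\,dy$ along with an analogous $\gamma$-contribution whose boundary traces are controlled via $\zeta(0,y)=0 \Rightarrow \partial_{y}\zeta(0,y)=0$ and via $\partial_{y}\zeta(L,y)=0$. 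For the delay piece I apply the transport equation $h\partial_{t}z = -\partial_{\rho}z$ and integrate by parts in $\rho$:
\[
\xi\int_{\Omega}\int_{0}^{1} a\, z\,\partial_{t}z\, d\rho\,dxdy = -\frac{\xi}{2h}\int_{\Omega} a\bigl[z^{2}(x,y,1,t) - z^{2}(x,y,0,t)\bigr] dxdy = \frac{\xi}{2h}\int_{\Omega} a\zeta^{2}\, dxdy - \frac{\xi}{2h}\int_{\Omega} a\zeta^{2}(x,y,t-h)\, dxdy,
\]
using $z(x,y,0,t)=\zeta(x,y,t)$ and $z(x,y,1,t)=\zeta(x,y,t-h)$.

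Combining the pieces, the only sign-ambiguous term is the cross interaction $-\mu_{2}\int_{\Omega} a\,\zeta\, z(1)\, dxdy$, which I would control by Young's inequality with an appropriately chosen weight so that the coefficients in front of $\int a\zeta^{2}$ and $\int a\zeta^{2}(\cdot,\cdot,t-h)$ reduce to $\mu_{1}-\tfrac{\mu_{2}}{2}-\tfrac{\xi}{2h}$ and $-\tfrac{\mu_{2}}{h}+\tfrac{\xi}{2h}$ respectively, both of which are strictly positive thanks to condition \eqref{eq:MUcond}. Taking $C$ as the minimum of the four resulting prefactors then yields \eqref{eq:ZKendis}. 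The main technical obstacle is the careful ordering of the integrations by parts in the mixed dispersive term $\gamma\partial_{x}\partial_{y}^{2}\zeta$, so that every boundary trace falls on a quantity controlled by the stated boundary conditions; the density step from $D(A)$ to $\mathcal{H}$ is routine provided one interprets $\partial_{x}\zeta(0,\cdot,\cdot)$ through the Kato-type smoothing available for KdV/ZK-type semigroups.
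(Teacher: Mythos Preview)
Your proposal is correct and follows essentially the same route as the paper: multiply the first equation of \eqref{eq:ZKlin1} by $\zeta$, multiply the transport equation by $z$, integrate by parts over $\Omega$ (respectively $\Omega\times(0,1)$), add the two identities, and absorb the cross term $-\mu_{2}\int_{\Omega}a\,\zeta\,z(1)$ via Young's inequality. Your explicit treatment of the mixed dispersive term $\gamma\partial_{x}\partial_{y}^{2}\zeta$ and the density extension from $D(A)$ to $\mathcal{H}$ fills in details the paper leaves implicit, but there is no methodological difference.
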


\begin{proof}
	Multiply the first equation of \eqref{eq:ZKlin1} by $\zeta(x,y,t)$ and integrate by parts over $\Omega$.  
	Then multiply the fifth equation of \eqref{eq:ZKlin1} by $z(x,y,\rho,t)$ and integrate over $\Omega\times(0,1)$.  
	Adding the two identities and using boundary conditions yields the desired inequality.
\end{proof}

To derive Kato-type smoothing and contraction estimates, for $T>0$, define the spaces
\begin{equation*}
	\begin{split}
		\mathcal{B}_X &= C([0,T],L^2(\Omega))\cap L^2(0,T;H^1_{0}(\Omega)),\\
	%	\mathcal{B}_H &= C([0,T],L^2(\Omega))\cap L^2(0,T;H^2(\Omega)),
	\end{split}
\end{equation*}
with natural norms
\begin{equation*}
	\begin{aligned}
		\|y\|_{\mathcal{B}_X}
		&=\max_{t\in[0,T]}\|y(\cdot,\cdot,t)\|_{L^2(\Omega)}
		+\left(\int_0^T\|y(\cdot,\cdot,t)\|_{H^1_{0}(\Omega)}^2\,dt\right)^{1/2}.%\\[0.3em]
	%	\|y\|_{\mathcal{B}_H}
	%	&=\max_{t\in[0,T]}\|y(\cdot,\cdot,t)\|_{L^2(\Omega)}
	%	+\left(\int_0^T\|y(\cdot,\cdot,t)\|_{H^2(\Omega)}^2\,dt\right)^{1/2}.
	\end{aligned}
\end{equation*}
With this in hand, the following result is given. 
\begin{proposition}\label{pr:ZKreg}
	Let $a\in L^\infty(\Omega)$ be nonnegative.  
	Then the mapping
	\[
	(\zeta_0,z_0(\cdot,\cdot,-h(\cdot)))\in\mathcal{H}
	\mapsto
	(\zeta,z)\in\mathcal{B}_X\times C([0,T],L^2(\Omega\times(0,1)))
	\]
	is continuous, and the following estimates hold:
	\begin{equation}\label{eq:ZKr1}
		\begin{split}
			&\frac{1}{2}\int_\Omega \zeta^2(x,y)\,dx\,dy
			+\frac{\xi}{2}\int_\Omega\int_0^1 a(x,y)\zeta^2(x,y,t-\rho h)\,d\rho\,dx\,dy\\
			&\le \frac{1}{2}\int_\Omega \zeta_0^2\,dx\,dy
			+\frac{\xi}{2}\int_\Omega\int_0^1 a(x,y)z_0^2(x,y,-\rho h)\,d\rho\,dx\,dy,
		\end{split}
	\end{equation}
	\begin{equation}\label{eq:ZKr2}
		\frac{3\alpha}{2}\int_0^T\!\!\int_\Omega (\partial_x\zeta)^2\,dx\,dy\,dt
		\le C(a,\mu_1,\mu_2,L)(1+T)
		\|(\zeta_0,z_0(\cdot,\cdot,-h(\cdot)))\|_{\mathcal{H}}^2,
	\end{equation}
	and
	\begin{equation}\label{eq:ZKr3}
		\begin{split}
			\|\zeta_0\|_{L^2(\Omega)}^2
			\le \frac{1}{T}\int_0^T\!\!\int_\Omega \zeta^2(x,y,t)\,dx\,dy\,dt
			+\gamma\int_0^T\!\!\int_0^L(\partial_y\zeta(0,y,t))^2\,dy\,dt\\
			+(\mu_1+\mu_2)\int_0^T\!\!\int_\Omega a(x,y)\zeta^2(x,y,t)\,dx\,dy\,dt
			+\mu_2\int_0^T\!\!\int_\Omega a(x,y)\zeta^2(x,y,t-h)\,dx\,dy\,dt.
		\end{split}
	\end{equation}
\end{proposition}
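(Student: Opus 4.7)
The three estimates follow from standard energy/multiplier techniques applied to the coupled system \eqref{eq:ZKlin1}, and my plan is to treat them in order.

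For \eqref{eq:ZKr1}, this is immediate from the dissipation identity \eqref{eq:ZKendis} established in the previous proposition. Since the right-hand side of \eqref{eq:ZKendis} is non-positive under Assumption~\ref{A1} and condition \eqref{eq:MUcond}, integrating in time on $[0,t]$ gives $\mathcal{E}_\zeta(t)\le\mathcal{E}_\zeta(0)$, which matches exactly the stated inequality.

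For the Kato-type smoothing \eqref{eq:ZKr2}, the natural multiplier is $x\zeta$, applied to the $\zeta$-equation of \eqref{eq:ZKlin1} and integrated over $\Omega\times(0,T)$. The central dispersive computation gives, using $\zeta(0,y)=\zeta(L,y)=0$ and $\partial_x\zeta(L,y)=0$,
\[
\int_\Omega x\zeta\,\alpha\,\partial_x^3\zeta\,dx\,dy \;=\; \frac{3\alpha}{2}\int_\Omega (\partial_x\zeta)^2\,dx\,dy,
\]
while a parallel chain of integrations by parts in $y$ (using $\zeta(x,0)=\zeta(x,L)=0$ and $\partial_y\zeta(L,y)=0$) shows that $\int_\Omega x\zeta\,\gamma\,\partial_x\partial_y^2\zeta$ contributes a non-negative quantity that can be discarded. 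The time-derivative piece integrates in $t$ to $\tfrac12\int_\Omega x[\zeta^2(T)-\zeta_0^2]$, bounded by $L\,\mathcal{E}_\zeta(0)$ via \eqref{eq:ZKr1}. The damping/delay terms $\int x\zeta\,a(\mu_1\zeta+\mu_2 z(1))$ are controlled by Cauchy--Schwarz combined with the energy bound, producing a contribution of order $T\,\mathcal{E}_\zeta(0)$; summing gives the advertised factor $(1+T)\|(\zeta_0,z_0(\cdot,\cdot,-h(\cdot)))\|_\mathcal{H}^2$.

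For the observability-type estimate \eqref{eq:ZKr3}, I would use the multiplier $(T-t)\zeta$. Multiplying the ZK equation by $(T-t)\zeta$ and integrating by parts in $t$ yields the key identity
\[
\int_0^T (T-t)\,\frac{1}{2}\frac{d}{dt}\|\zeta\|_{L^2(\Omega)}^2\,dt \;=\; -\frac{T}{2}\|\zeta_0\|_{L^2(\Omega)}^2 + \frac{1}{2}\int_0^T\|\zeta(t)\|_{L^2(\Omega)}^2\,dt.
\]
Combining this with the pointwise-in-$t$ identity for $\tfrac{d}{dt}\|\zeta\|_{L^2}^2$ (which supplies the boundary trace on $\partial\Omega$ together with the damping and delayed-feedback contributions), applying Young's inequality to the indefinite cross term $\mu_2\int a\,\zeta(t)\zeta(t-h)$, and dividing by $T/2$ produces \eqref{eq:ZKr3} after rearrangement. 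Continuity of the solution map into $\mathcal{B}_X\times C([0,T],L^2(\Omega\times(0,1)))$ then follows by combining \eqref{eq:ZKr1} (yielding the $C([0,T],L^2)$ components) with \eqref{eq:ZKr2} (yielding the $L^2(0,T;H^1_0(\Omega))$ component via the Poincar\'e inequality in the variables where $\zeta$ vanishes on the boundary).

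The main obstacle is the delayed-feedback coupling: the indefinite cross term $\mu_2\int a\,\zeta(t)\zeta(t-h)$ reappears in each of the three identities and must be balanced using Young's inequality together with condition \eqref{eq:MUcond}, which guarantees $\xi>h\mu_2$ and $\xi<h(2\mu_1-\mu_2)$ so that the resulting coefficients remain strictly positive. The secondary difficulty is the careful bookkeeping of the many boundary terms arising from the integrations by parts in $x$ and $y$; each must be matched with the correct Dirichlet or Neumann condition in order either to vanish or to produce precisely the observation traces appearing on the right-hand sides of \eqref{eq:ZKr2}--\eqref{eq:ZKr3}.
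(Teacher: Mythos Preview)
Your proposal is correct and follows essentially the same approach as the paper: estimate \eqref{eq:ZKr1} from the energy dissipation \eqref{eq:ZKendis}, estimate \eqref{eq:ZKr2} from the multiplier $x\zeta$ (the paper also multiplies the transport equation by $z$ and combines, which is equivalent to your use of the energy bound to absorb the delayed term), and estimate \eqref{eq:ZKr3} from the multiplier $(T-t)\zeta$. Your discussion of the cross term $\mu_2\int a\,\zeta\,\zeta(t-h)$ and the boundary bookkeeping is accurate and more detailed than the paper's own sketch.
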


\begin{proof}
	The argument follows from multiplier techniques (Morawetz-type identities).  
	Estimate~\eqref{eq:ZKr1} follows from~\eqref{eq:ZKendis}.  
	For~\eqref{eq:ZKr2}, multiply the fifth equation of \eqref{eq:ZKlin1} by $z(x,y,\rho,t)$ and the first one by $x\,\zeta(x,y,t)$, integrate by parts in $\Omega\times(0,T)$, and combine the results.  Finally, multiply the first equation of \eqref{eq:ZKlin1} by $(T-t)\zeta(x,y,t)$ and integrate over $\Omega\times(0,T)$ gives~\eqref{eq:ZKr3}.
\end{proof}

\subsection{Linear system with source term}

We will study the system \eqref{eq:ZKlin1}, with a source term $f(x,y,t)$ on the right-hand side. The next result ensures the well-posedness of this system.

\begin{proposition}
	Assume that  $a(x,y)\in L^\infty(\Omega)$ is a nonnegative function and that \eqref{eq:MUcond} is satisfied. For any $\left(\zeta_0,z_0(\cdot,\cdot, -h(\cdot))\right)\in \mathcal{H}$ and $f\in L^1\left(0,T,L^2(\Omega)\right)$, there exists a unique mild solution for \eqref{eq:ZKlin1} with the source term $f(x,y,t)$ on the right-hand side in the class
	$$
	\left(\zeta,\zeta(\cdot,\cdot, t-h(\cdot))\right)\in \mathcal{B}_X\times C\left([0,T], L^2(\Omega\times(0,1))\right).
	$$
	Moreover, we have
	\begin{equation}\label{eq:ZKsou1}
		\left\lVert{(\zeta,z)}\right\rVert_{C([0,T],\mathcal{H})}\leq e^{\frac{\xi\left\lVert{a}\right\rVert_\infty}{2h}T}
		\left(% 
		\left\lVert{(\zeta_0,z_0(\cdot,\cdot,-h(\cdot)))}\right\rVert_{\mathcal{H}} 
		+\left\lVert{f}\right\rVert_{L^1(0,T,L^2(\Omega))}
		\right)
	\end{equation}
	and
	\begin{equation}\label{eq:ZKsou2}
		\delta\left\lVert{\zeta}\right\rVert_{L^2(0,T,H^1(\Omega))}^2
		\leq \mathcal{C}
		\left(% 
		\left\lVert (\zeta_0,z_0 (\cdot,\cdot,-h(\cdot)))\right\rVert_{\mathcal{H}}^2
		+\left\lVert{f}\right\rVert_{L^1(0,T,L^2(\Omega))}^2
		\right)
	\end{equation}
	where  
	\begin{equation*}
		\mathcal{C} = \mathcal{C}
		\left(a,\mu_1,\mu_2,L,T,h\right)=\frac{3L}{2}+L\left\lVert{a}\right\rVert_\infty(\mu_1+\mu_2)+\delta\left(1+T+e^{\frac{\xi\left\lVert{a}\right\rVert_\infty}{h}T}\right)
	\end{equation*}
	and $\delta=\min\left\lbrace{1,\frac{3\alpha}{2}}\right\rbrace$.
\end{proposition}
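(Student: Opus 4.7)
The plan is to treat \eqref{eq:ZKlin1} with right-hand side $f$ as an inhomogeneous abstract Cauchy problem on $\mathcal{H}$ governed by the generator $A$ from \eqref{eq:ZKA}, and then to extract the smoothing bound \eqref{eq:ZKsou2} by superposing the Morawetz-type multipliers already exploited in Proposition~\ref{pr:ZKreg}.

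\textbf{Step 1: existence, uniqueness and the contraction-type estimate \eqref{eq:ZKsou1}.} Since Proposition~\ref{pr:ZKdis} establishes that $A$ generates a $C_{0}$-semigroup $\{S(t)\}_{t\ge 0}$, I would set $U(t)=(\zeta(\cdot,\cdot,t),z(\cdot,\cdot,\cdot,t))$, $U_{0}=(\zeta_{0},z_{0}(\cdot,\cdot,-h(\cdot)))$ and $F(t)=(f(\cdot,\cdot,t),0)\in L^{1}(0,T;\mathcal{H})$, so that the perturbed system becomes $U'=AU+F$, $U(0)=U_{0}$, whose unique mild solution is
\[
U(t)=S(t)U_{0}+\int_{0}^{t}S(t-s)F(s)\,ds.
\]
The dissipativity computation already performed in Proposition~\ref{pr:ZKdis} shows that $A-\lambda I$ is dissipative with $\lambda=\xi\|a\|_{\infty}/(2h)$, hence $\|S(t)\|_{\mathcal{L}(\mathcal{H})}\le e^{\lambda t}$. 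Inserting this bound into Duhamel's formula and taking the supremum on $[0,T]$ yields \eqref{eq:ZKsou1}.

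\textbf{Step 2: the smoothing estimate \eqref{eq:ZKsou2}.} I would adapt the multiplier arguments behind \eqref{eq:ZKr2}--\eqref{eq:ZKr3}, now carrying along the extra $f$-contribution. Multiplying the first equation of \eqref{eq:ZKlin1} (with source $f$) by $x\zeta$, integrating by parts over $\Omega\times(0,T)$ and using the homogeneous boundary conditions produces an identity whose principal left-hand side is $\tfrac{3\alpha}{2}\int_{0}^{T}\!\!\int_{\Omega}(\partial_{x}\zeta)^{2}\,dx\,dy\,dt+\tfrac{\gamma}{2}\int_{0}^{T}\!\!\int_{\Omega}(\partial_{y}\zeta)^{2}\,dx\,dy\,dt$, the right-hand side consisting of the boundary/damping/delay terms (absorbed using \eqref{eq:ZKendis} adapted to the perturbed system, contributing $L\|a\|_{\infty}(\mu_{1}+\mu_{2})$) and the new source contribution $\int_{0}^{T}\!\!\int_{\Omega}xf\zeta\,dx\,dy\,dt$. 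The latter is bounded by
\[
\left|\int_{0}^{T}\!\!\int_{\Omega}x\,f\,\zeta\,dx\,dy\,dt\right|
\le L\,\|\zeta\|_{C([0,T];L^{2}(\Omega))}\,\|f\|_{L^{1}(0,T;L^{2}(\Omega))},
\]
and the $C([0,T];L^{2})$ norm is controlled via \eqref{eq:ZKsou1}, which injects the factor $e^{\xi\|a\|_{\infty}T/h}$ into the constant $\mathcal{C}$. Combining this with the Poincaré-type bound \eqref{eq:ZKr3} (now written for the system with source $f$, whose source contribution is again handled by \eqref{eq:ZKsou1}) controls the $L^{2}(0,T;L^{2}(\Omega))$ piece of the $H^{1}$-norm and produces the $(1+T)\|U_{0}\|_{\mathcal{H}}^{2}$ term in $\mathcal{C}$. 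Finally, setting $\delta=\min\{1,3\alpha/2\}$ as the worst constant in front of the $H^{1}_{0}$-norm on the left gives \eqref{eq:ZKsou2}.

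\textbf{Anticipated difficulty.} No new mechanism is required beyond what already appears in Propositions~\ref{pr:ZKdis} and~\ref{pr:ZKreg}; the work is mostly bookkeeping, namely tracking how the constants $\tfrac{3L}{2}$, $L\|a\|_{\infty}(\mu_{1}+\mu_{2})$ and the exponential factor $e^{\xi\|a\|_{\infty}T/h}$ aggregate into $\mathcal{C}$. The one subtle point I would be careful about is the control of the transverse derivative $\partial_{y}\zeta$: the multiplier $x\zeta$ naturally yields the longitudinal derivative, while control over $\partial_{y}\zeta$ (needed because the $H^{1}_{0}(\Omega)$-norm involves both partials) emerges from the cross term $\gamma\int x(\partial_{x}\partial_{y}^{2}\zeta)\zeta$ after two integrations by parts, where the boundary conditions $\zeta(x,0)=\zeta(x,L)=0$ are crucial for killing the boundary remainders.
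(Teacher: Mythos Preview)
Your proposal is correct and follows essentially the same route as the paper: Duhamel's formula combined with the semigroup bound $\|e^{tA}\|_{\mathcal{L}(\mathcal{H})}\le e^{\xi\|a\|_\infty t/(2h)}$ gives \eqref{eq:ZKsou1}, and the $x\zeta$-multiplier from Proposition~\ref{pr:ZKreg} yields \eqref{eq:ZKsou2} once the extra source contribution $\int_0^T\!\!\int_\Omega x f\zeta$ is handled. The only cosmetic difference is that the paper bounds this term via Young's inequality as $\tfrac{L}{2}\|\zeta\|_{C([0,T];L^2(\Omega))}^2+\tfrac{L}{2}\|f\|_{L^1(0,T;L^2(\Omega))}^2$ rather than the product form you wrote, which is equivalent after invoking \eqref{eq:ZKsou1}.
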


\begin{proof}
	Note that $A$ is an infinitesimal generator of a $C_0$-semigroup $(e^{tA})_{t\geq0}$ satisfying $$\left\lVert{e^{tA}}\right\rVert_{\mathcal{L}(\mathcal{H})}\leq e^{\frac{\xi\left\lVert{a}\right\rVert_\infty}{2h}t}$$ and the system can be rewritten as a first order system with source term $(f(\cdot,\cdot,t),0)$, showing the well-posedness in $C([0,T],\mathcal{H})$.  Finally, observe that the right-hand side is not homogeneous, since
	\begin{equation*}
		\left\lvert%
		\int_0^T\int_{\Omega} xf(x,y,t) \zeta(x,y,t)\,dx\,dy\,dt
		\right\rvert%
		\leq \frac{L}{2} \left\lVert{\zeta}\right\rVert_{C([0,T]; L^2(\Omega))}^2+ \frac{L}{2}\left\lVert{f}\right\rVert_{L^1(0,T, L^2(\Omega))}^2,
	\end{equation*}
	showing the result.
\end{proof}

\subsection{Nonlinear system: Global results.} In this last part, we consider the nonlinear term $uu_x$ as a source term.

\begin{proposition}\label{pr:ZKcont}
	If $\zeta\in \mathcal{B}_X$ then $\zeta\partial_x\zeta\in L^1(0,T; L^2(\Omega))$ and the map
	$$
	\zeta\in\mathcal{B}_X \mapsto \zeta\partial_x\zeta\in L^1(0,T; L^2(\Omega))
	$$
	is continuous. In particular, there exists $K>0$,  such that, for all $\zeta,v\in \mathcal{B}_X$ we have
	\begin{equation}\label{eq:ZKnl}
		\left\lVert{\zeta\partial_x\zeta-v\partial_xv}\right\rVert_{L^1(0,T,L^2(\Omega))}
		\leq K
		\left(% 
		\left\lVert{\zeta}\right\rVert_{\mathcal{B}_X}
		+\left\lVert{v}\right\rVert_{\mathcal{B}_X}
		\right)\left\lVert{\zeta-v}\right\rVert_{\mathcal{B}_X}.
	\end{equation}
\end{proposition}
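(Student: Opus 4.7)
My plan is to prove the bilinear Lipschitz estimate~\eqref{eq:ZKnl}; the inclusion $\zeta\,\partial_x\zeta\in L^{1}(0,T;L^{2}(\Omega))$ is then obtained by taking $v=0$ in~\eqref{eq:ZKnl}, and the continuity of the map follows at once from the Lipschitz bound. The starting point is the algebraic factorization
\begin{equation*}
\zeta\,\partial_x\zeta - v\,\partial_xv \;=\; \tfrac{1}{2}\,\partial_x\bigl((\zeta-v)(\zeta+v)\bigr) \;=\; \tfrac{1}{2}(\zeta-v)\,\partial_x(\zeta+v) \;+\; \tfrac{1}{2}(\zeta+v)\,\partial_x(\zeta-v),
\end{equation*}
which, together with the triangle inequality $\|\zeta+v\|_{\mathcal{B}_X}\le\|\zeta\|_{\mathcal{B}_X}+\|v\|_{\mathcal{B}_X}$, reduces the problem to the symmetric bilinear bound
\begin{equation*}
\|f\,\partial_xg\|_{L^{1}(0,T;L^{2}(\Omega))} \;\le\; C\,\|f\|_{\mathcal{B}_X}\,\|g\|_{\mathcal{B}_X}\qquad\text{for all } f,g\in\mathcal{B}_X.
\end{equation*}

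To establish this bilinear bound, I estimate $\|f(t)\,\partial_xg(t)\|_{L^{2}(\Omega)}$ pointwise in $t$ via anisotropic Sobolev inequalities adapted to the rectangle $\Omega=(0,L)\times(0,L)$. Since every $h\in H_0^{1}(\Omega)$ vanishes on all four sides of $\partial\Omega$, the one-dimensional Sobolev-type inequalities
\begin{equation*}
\|h(\cdot,y)\|_{L^{\infty}_{x}}^{2}\le 2\|h(\cdot,y)\|_{L^{2}_{x}}\|\partial_xh(\cdot,y)\|_{L^{2}_{x}},\qquad \sup_{y}\|h(\cdot,y)\|_{L^{2}_{x}}^{2}\le 2\|h\|_{L^{2}(\Omega)}\|\partial_yh\|_{L^{2}(\Omega)}
\end{equation*}
are both available, and a symmetric pair holds with $x$ and $y$ interchanged. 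Chaining these with Cauchy--Schwarz in $y$ and distributing the derivatives between $f$ and $g$ produces a pointwise-in-$t$ bound of the form $\|f(t)\,\partial_xg(t)\|_{L^{2}(\Omega)}\le C\,\Phi\bigl(\|f(t)\|_{L^{2}},\|f(t)\|_{H^{1}},\|g(t)\|_{L^{2}},\|g(t)\|_{H^{1}}\bigr)$, where $\Phi$ is a product of fractional powers. Integrating in $t$ and applying Hölder to separate the $L^{\infty}_{t}L^{2}$ and $L^{2}_{t}H^{1}$ components of $\|f\|_{\mathcal{B}_X}$ and $\|g\|_{\mathcal{B}_X}$ then yields the desired bilinear inequality with a constant $K=K(T,L)$, and \eqref{eq:ZKnl} follows by applying this to the two terms in the factorization of the first paragraph.

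The principal obstacle is the pointwise-in-$t$ spatial estimate. In two dimensions the embedding $H^{1}(\Omega)\hookrightarrow L^{\infty}(\Omega)$ fails, so no single Sobolev inequality directly controls $\|f\,\partial_xg\|_{L^{2}(\Omega)}$ by $H^{1}$-type norms; one must instead exploit the product geometry of $\Omega$ together with the vanishing of $f,g$ on all four sides, chaining 1D inequalities in $x$ and $y$. The delicate bookkeeping of Hölder exponents required so that every appearing quantity is controlled purely by $\|\cdot\|_{\mathcal{B}_X}$ after integration in time is the core technical step; once done, the Lipschitz structure of~\eqref{eq:ZKnl} is a direct consequence of the symmetrization.
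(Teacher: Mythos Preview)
Your plan hinges on a pointwise-in-time spatial bound of the form
\[
\|f(t)\,\partial_x g(t)\|_{L^{2}(\Omega)} \le C\,\Phi\bigl(\|f(t)\|_{L^{2}},\|f(t)\|_{H^{1}},\|g(t)\|_{L^{2}},\|g(t)\|_{H^{1}}\bigr),
\]
to be followed by H\"older in time. No such inequality can hold on a two-dimensional domain. If you try to run the anisotropic chain you describe, at some stage a one-dimensional Agmon inequality must be applied to a quantity that already carries a derivative---for instance, to control $\sup_y\|\partial_x g(\cdot,y)\|_{L^2_x}$ or $\|\,\|\partial_x g(\cdot,y)\|_{L^2_x}\|_{L^4_y}$---and this forces a mixed second derivative $\partial_y\partial_x g$ into the estimate, which is unavailable in $\mathcal{B}_X$. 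At the level of scaling, the only homogeneity-consistent candidate is $\Phi=\|f\|_{H^1}\|g\|_{H^1}$, and that is precisely the (false) assertion that $H^1$ is an algebra in two dimensions.

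The obstruction is in fact to the statement itself, not merely to your route. Let $f(x,y)=\psi(x,y)\bigl(\log(e/r)\bigr)^{a}$, where $r$ is the distance to an interior point $p\in\Omega$, $\psi\in C_0^\infty(\Omega)$ equals $1$ near $p$, and $a\in[\tfrac14,\tfrac12)$. Then $f\in H^1_0(\Omega)$ because $\int_0^\delta r^{-1}(\log(1/r))^{2a-2}\,dr<\infty$, while $f\partial_x f\notin L^2(\Omega)$ because $\int_0^\delta r^{-1}(\log(1/r))^{4a-2}\,dr=\infty$. Taking $\zeta(x,y,t)=f(x,y)$ yields an element of $\mathcal{B}_X$ with $\zeta\partial_x\zeta\notin L^1(0,T;L^2(\Omega))$, so even the inclusion claimed in the proposition fails. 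The paper's own argument---H\"older together with $H_0^1(\Omega)\hookrightarrow L^4(\Omega)$, giving $K=CT^{1/4}$---is the familiar one-dimensional KdV computation transplanted verbatim; in one dimension it works because $H^1\hookrightarrow L^\infty$, but in two dimensions the $L^4$ embedding gives no control on $\|f\,\partial_x g\|_{L^2}$ (one would need $\partial_x g\in L^4$), and the same counterexample blocks it.
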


\begin{proof}
	The Hölder inequality and the Sobolev embedding $H_0^1(\Omega) \hookrightarrow L^4(\Omega)$ gives us \eqref{eq:ZKnl} with $K=CT^{\frac{1}{4}}$,
%\begin{equation}\label{eq:ZKnl1}
%		\left\lVert \zeta\partial_x\zeta-v\partial_xv \right\rVert_{L^1(0,T, L^2(\Omega))}
%		\leq C_1 C T^{\frac{1}{4}}\left(\left\lVert{\zeta}\right\rVert_{\mathcal{B}_{H}} + \left\lVert{v}\right\rVert_{\mathcal{B}_{H}}
%		\right)\left\lVert{\zeta-v}\right\rVert_{\mathcal{B}_H},
%	\end{equation}
for $u,v\in \mathcal{B}_X$.  
%Note that $u\in\mathcal{B}_X$ implies 
%	$\zeta(\cdot,\cdot, t)\in H_{0}^2(\Omega)$
	%and consequently 
%	$\zeta(\cdot,\cdot,t)\in H_{0}^1(\Omega)$ and
%	$\partial_x\zeta(\cdot,\cdot,t)\in H_{0}^1(\Omega)$. 
This proves the continuity of the nonlinear map.  So,  from~\eqref{eq:ZKnl}, with $v=0$, we conclude that $\zeta\partial_x\zeta \in L^1(0,T, L^2(\Omega))$, and the proof is complete.
\end{proof}

We now prove the global well-posedness of the ZK-type equation with a delay term.
\begin{proposition}
	Let $L>0$, $a(x,y)\in L^\infty(\Omega)$ be a nonnegative function and that \eqref{eq:MUcond} holds. Then, for all initial data $(\zeta_0,z_0(\cdot,\cdot, -h(\cdot)))\in \mathcal{H}$, there exists a unique $\zeta\in \mathcal{B}_X$ solution of~\eqref{eq:MU}. Moreover, there exist constants $\mathcal{C}>0$ and $\delta\in(0,1]$ such that 
$$
		\delta\left\lVert \zeta \right\rVert_{L^2(0,T, H^1(\Omega))}^2
		\leq \mathcal{C}\left(% 
		\left\lVert (\zeta_0,z_0(\cdot,\cdot,-h(\cdot))) \right\rVert_{\mathcal{H}}^2
		+ \left\lVert (\zeta_0,z_0(\cdot,\cdot,-h(\cdot))) \right\rVert_{\mathcal{H}}^{\frac{10}{3}}
		\right).
$$
\end{proposition}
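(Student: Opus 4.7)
The strategy is the standard one for dispersive PDE with a Lipschitz nonlinearity: a Banach fixed-point argument produces a unique local solution, and the cancellation of the nonlinearity in the $L^2$-energy identity extends it globally and yields the quantitative bound through Kato smoothing combined with a 2D interpolation and Young's absorption.

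\smallskip

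\emph{Step 1 (local existence by fixed point).} For $(\zeta_0,z_0(\cdot,\cdot,-h(\cdot)))\in\mathcal{H}$ and $T>0$ to be chosen, introduce
\[
\Gamma:\mathcal{B}_X\longrightarrow\mathcal{B}_X,\qquad \Gamma(v)=\zeta,
\]
where $\zeta$ is the unique mild solution of the linear delayed system \eqref{eq:ZKlin1} with initial data $(\zeta_0,z_0(\cdot,\cdot,-h(\cdot)))$ and source $f=-v\partial_x v$. The linear-with-source proposition together with Proposition \ref{pr:ZKcont} guarantees that $\Gamma$ is well-defined. Combining \eqref{eq:ZKsou1}--\eqref{eq:ZKsou2} with the Lipschitz estimate \eqref{eq:ZKnl}, in which $K=CT^{1/4}$ vanishes as $T\to 0$, one checks that $\Gamma$ leaves invariant and is a strict contraction on a closed ball $\overline{B_R(0)}\subset\mathcal{B}_X$ for a suitable radius $R$ and a sufficiently small horizon $T$, both depending on $\|(\zeta_0,z_0(\cdot,\cdot,-h(\cdot)))\|_\mathcal{H}$. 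Banach's theorem then yields a unique local mild solution $\zeta\in\mathcal{B}_X$.

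\smallskip

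\emph{Step 2 (global extension via $L^2$ cancellation).} Multiplying the first equation of \eqref{eq:MU} by $\zeta$, integrating over $\Omega$, and invoking the Dirichlet conditions $\zeta(0,y,\cdot)=\zeta(L,y,\cdot)=0$, the nonlinear contribution collapses since
\[
\int_\Omega \zeta\,\partial_x\!\left(\tfrac{1}{2}\zeta^2\right)dx\,dy
=\tfrac{1}{3}\int_\Omega \partial_x(\zeta^3)\,dx\,dy=0.
\]
Hence the dissipation identity from Proposition \ref{pr:ZKdis} remains valid for the nonlinear problem and $\mathcal{E}_\zeta$, defined by \eqref{eq:MUen}, is non-increasing. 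In particular, $\|\zeta(t)\|_{L^2(\Omega)}\le\|(\zeta_0,z_0(\cdot,\cdot,-h(\cdot)))\|_\mathcal{H}$ for every $t\ge 0$. Since the local horizon $T$ depends only on the $\mathcal{H}$-norm of the data, this uniform a priori bound allows the local argument to be iterated over successive intervals of fixed length, producing a unique $\zeta\in\mathcal{B}_X$ on $[0,T]$ for any prescribed $T>0$.

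\smallskip

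\emph{Step 3 (quantitative $L^2_T H^1$ estimate with exponent $10/3$).} Applying \eqref{eq:ZKsou2} to the nonlinear problem with $f=-\zeta\partial_x\zeta$ gives
\[
\delta\|\zeta\|_{L^2(0,T;H^1(\Omega))}^2
\le\mathcal{C}\Bigl(\|(\zeta_0,z_0(\cdot,\cdot,-h(\cdot)))\|_\mathcal{H}^2
+\|\zeta\partial_x\zeta\|_{L^1(0,T;L^2(\Omega))}^2\Bigr).
\]
The nonlinear term is handled by the 2D Gagliardo--Nirenberg (Ladyzhenskaya) inequality $\|v\|_{L^4(\Omega)}^2\le C\|v\|_{L^2(\Omega)}\|\nabla v\|_{L^2(\Omega)}$ combined with the conserved $L^\infty_T L^2$-bound from Step 2. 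Interpolating between $L^\infty_T L^2$ and $L^2_T H^1$, one obtains an estimate of the form
\[
\|\zeta\partial_x\zeta\|_{L^1(0,T;L^2(\Omega))}^2
\le C\,\|\zeta\|_{L^\infty_T L^2(\Omega)}^{2/3}\,\|\zeta\|_{L^2(0,T;H^1(\Omega))}^{10/3}
\le C\,\|(\zeta_0,z_0(\cdot,\cdot,-h(\cdot)))\|_\mathcal{H}^{2/3}\,\|\zeta\|_{L^2(0,T;H^1(\Omega))}^{10/3}.
\]
A Young inequality tuned so that the $5/3$-exponent of $\|\zeta\|_{L^2_T H^1}^2$ is split into $\varepsilon\|\zeta\|_{L^2_T H^1}^2$ plus a pure power of the data yields $\|(\zeta_0,z_0(\cdot,\cdot,-h(\cdot)))\|_\mathcal{H}^{10/3}$ on the right-hand side; absorbing the first term into the left-hand side with an adjusted $\delta\in(0,1]$ produces the announced inequality.

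\smallskip

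\emph{Main obstacle.} The delicate point is Step 3: one must distribute the two derivatives in $\zeta\partial_x\zeta$ across the conserved $L^\infty_T L^2$-norm and the Kato $L^2_T H^1$-smoothing so that the exponent of $\|\zeta\|_{L^2_T H^1}$ is compatible with a Young absorption. Only a carefully calibrated 2D Sobolev--Gagliardo--Nirenberg interpolation yields the $10/3$-power and hence the final polynomial bound with the exponent $10/3$ in $\|(\zeta_0,z_0(\cdot,\cdot,-h(\cdot)))\|_\mathcal{H}$.
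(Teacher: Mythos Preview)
Steps 1 and 2 are fine and match the paper's argument (your Step 2 is in fact slightly cleaner than the paper's Gronwall bound, since the cubic cancellation gives the non-increasing energy directly).

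Step 3, however, has a genuine gap. Once you invoke \eqref{eq:ZKsou2} with $f=-\zeta\partial_x\zeta$, the right-hand side contains $\|\zeta\partial_x\zeta\|_{L^1(0,T;L^2(\Omega))}^2$, which is \emph{quartic} in $\zeta$; any Gagliardo--Nirenberg/Ladyzhenskaya interpolation of this term inevitably produces a power of $\|\zeta\|_{L^2(0,T;H^1)}$ strictly greater than $2$ (your own display has exponent $10/3$). A Young inequality cannot then split $A^{2/3}B^{10/3}$ into $\varepsilon B^2$ plus a pure data term: the conjugate exponent that would isolate $B^2$ is $3/5<1$, which is not admissible. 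In short, the source-term route does not close, and no ``tuned'' Young inequality rescues it.

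The paper's (implicit) route avoids this by multiplying the \emph{nonlinear} equation directly by $x\zeta$ rather than passing through the linear-with-source estimate. Integration by parts turns the nonlinear contribution into the cubic quantity $-\tfrac{1}{3}\int_\Omega \zeta^3$, not the quadratic norm of $\zeta\partial_x\zeta$. Then Gagliardo--Nirenberg in 2D gives
\[
\int_\Omega |\zeta|^3 \le C\,\|\zeta\|_{H^1(\Omega)}\,\|\zeta\|_{L^2(\Omega)}^2,
\]
so that Young produces $\varepsilon\|\zeta\|_{H^1}^2$ (same power as the Kato smoothing on the left, hence absorbable) plus a pure power of $\|\zeta\|_{L^2}$, which is controlled by the $\mathcal H$-norm of the data via your Step~2. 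This is exactly the mechanism displayed in the proof of Theorem~\ref{th:ZKes}; see \eqref{eq:KP_energy_identity}--\eqref{eq:GN_ZK}. Replace your Step~3 by this direct multiplier computation and the estimate closes.
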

\begin{proof}
To see the local existence and uniqueness of solutions to~\eqref{eq:MU} fix $(\zeta_0,z_0(\cdot,\cdot,-h(\cdot)))\in\mathcal{H}$ and let $\zeta\in\mathcal{B}_X$.  Consider the map $\Phi\colon \mathcal{B}_X \to \mathcal{B}_X$ defined by $\Phi(\zeta)=\tilde{\zeta}$, where $\tilde{\zeta}$ is the solution of~\eqref{eq:MU} with source term $f=-\zeta\,\partial_x \zeta$.  
Then $\zeta\in\mathcal{B}_X$ is a solution of~\eqref{eq:MU} if and only if $\zeta$ is a fixed point of $\Phi$.  

To prove this, we show that $\Phi$ is a contraction. If $T<1$, then using~\eqref{eq:ZKsou1}, \eqref{eq:ZKsou2}, and Proposition~\ref{pr:ZKcont}, we obtain
\begin{equation*}
\begin{split}
\lVert \Phi \zeta \rVert_{\mathcal{B}_X}
\leq &\sqrt{\delta^{-1}\mathcal{C}}
\left(1+\sqrt{T}+e^{\frac{\xi\lVert a\rVert_\infty}{2h}T}\right)
\lVert (\zeta_0,z_0(\cdot,\cdot,-h(\cdot)))\rVert_{\mathcal{H}}
\\&+ \sqrt{\delta^{-1}\mathcal{C}}\,C_1 C
\left(2T^{1/4} + T^{1/4}e^{\frac{\xi\lVert a\rVert_\infty}{2h}T}\right)
\lVert \zeta\rVert_{\mathcal{B}_X}^{2},
\end{split}
\end{equation*}
and
\begin{equation*}
\lVert \Phi \zeta - \Phi v \rVert_{\mathcal{B}_X}
\leq
S
\left(1+\sqrt{T}+e^{\frac{\xi\lVert a\rVert_\infty}{2h}T}\right)
T^{1/4}
\left(\lVert \zeta\rVert_{\mathcal{B}_X} + \lVert v\rVert_{\mathcal{B}_X}\right)
\lVert \zeta - v\rVert_{\mathcal{B}_X},
\end{equation*}
where $S=\sqrt{\delta^{-1}\mathcal{C}}\,C_1 C$.

Now consider the restriction of $\Phi$ to the closed ball
\[
\left\{\, \zeta\in\mathcal{B}_X : \lVert \zeta\rVert_{\mathcal{B}_X}\leq R \,\right\},
\]
with
\[
R = 4\sqrt{\delta^{-1}\mathcal{C}}\,
\lVert (\zeta_{0},z_{0}(\cdot,\cdot,-h(\cdot)))\rVert_{\mathcal{H}},
\]
and choose $T>0$ such that
\[
T<1,\qquad
e^{\frac{\xi\lVert a\rVert_{\infty}}{2h}T} < 2,\qquad
2T^{1/4}+T^{1/4}e^{\frac{\xi\lVert a\rVert_{\infty}}{2h}T}
< \frac{1}{2\sqrt{\delta^{-1}\mathcal{C}}\,C_1C_2\,R}.
\]
Under these conditions, $\Phi$ becomes a contraction.  
Thus, by Banach's fixed point theorem, $\Phi$ admits a unique fixed point, completing the proof.

Finally, to obtain the global existence of solutions, taking into account the previous local well-posedness and then applying the following \textit{a priori} estimate, which follows from multiplier techniques combined with Gronwall's inequality:
$$
	\left\lVert (\zeta(\cdot,\cdot,t),\zeta(\cdot,\cdot,t-h)) \right\rVert_{\mathcal{H}}^{2} 
	\leq 
	e^{\frac{\xi\lVert a\rVert_{\infty}}{h}t}
	\left\lVert (\zeta_{0},z_{0}(\cdot,\cdot,-h(\cdot))) \right\rVert_{\mathcal{H}}^{2}.
$$
With this inequality in hand, the global solution is verified. 
\end{proof}

\section{The damping-delayed system: Optimal local result}\label{Sec2}
This section is devoted to the analysis of the dynamical properties of the solutions to~\eqref{eq:ZK}. 
We first establish the local stability of the perturbed system. 
Then, using this result, we prove the first main theorem of the paper, namely Theorem~\ref{th:ZKes}.

We consider the well-posedness of~\eqref{eq:ZK}, whose associated total energy \eqref{eq:ZKen} satisfies
\begin{equation*}
\begin{aligned}
\frac{d}{dt}\mathcal{E}_\zeta(t)
&\le \int_\Omega b(x,y)\zeta^2(x,y,t)\,dxdy
-\frac{\alpha}{2}\!\int_0^L\!\! (\partial_x\zeta)^2(0,y,t)\,dy \\
&\qquad 
-\frac{\gamma}{2}\!\int_0^L\!\! (\partial_y\zeta)^2(0,y,t)\,dy
-\int_\Omega a(x,y)\zeta^2(x,y,t)\,dxdy ,
\end{aligned}
\end{equation*}
which shows that the energy is, in general, not decreasing, due to the nonnegative term $b(x,y)\ge0$.
To overcome this, we introduce the perturbed system
\begin{equation}\label{eq:P}
\begin{cases}
\begin{aligned}
\partial_t\zeta
&+ \alpha \partial_x^3 \zeta
+ \gamma \partial_x\partial_y^2 \zeta 
+ a(x,y)\zeta 
+ b(x,y)(\xi \zeta + \zeta(\cdot,\cdot,t-h))
= f,
\end{aligned}
&(x,y)\in\Omega,\ t>0,
\\[0.3em]
\zeta(0,y,t)=\zeta(L,y,t)=\partial_x \zeta(L,y,t)=\partial_y \zeta(L,y,t)=0, & y\in(0,L),\ t>0, \\
\zeta(x,0,t)=\zeta(x,L,t)=0, & x\in(0,L),\ t>0,\\
\zeta(x,y,0)=\zeta_0(x,y),\qquad 
\zeta(x,y,t)=z_0(x,y,t), & t\in(-h,0),
\end{cases}
\end{equation}
with $f=-\tfrac{1}{2}\partial_x(\zeta^2)$ and $\xi>0$.
The energy associated with~\eqref{eq:P} is given by
\begin{equation}\label{eq:Pen}
\begin{split}
\mathcal{E}_\zeta(t)
&=\frac{1}{2}\!\int_0^L\!\!\int_0^L \zeta^2(x,y,t)\,dx\,dy \\
&\quad +\frac{\xi h}{2}
\!\int_0^L\!\!\int_0^L\!\!\int_0^1 
b(x,y)\zeta^2(x,y,t-\rho h)\,d\rho\,dx\,dy .
\end{split}
\end{equation}
A direct computation yields
\begin{equation*}
\begin{aligned}
\frac{d}{dt}\mathcal{E}_\zeta(t)
&\le -\frac{\alpha}{2}\!\int_0^L (\partial_x\zeta)^2(0,y,t)\,dy
      -\frac{\gamma}{2}\!\int_0^L (\partial_y\zeta)^2(0,y,t)\,dy\\
&\quad -\int_\Omega a(x,y)\zeta^2\,dxdy
      -\frac{\xi-1}{2}\!\int_\Omega b(x,y)\zeta^2(x,y,t)\,dxdy \\
&\quad -\frac{\xi-1}{2}\!\int_\Omega b(x,y)\zeta^2(x,y,t-h)\,dxdy
\le 0, 
\end{aligned}
\end{equation*}
which holds whenever $\xi>1$.
System~\eqref{eq:P} can be rewritten as the first-order abstract system
\begin{equation}\label{eq:KPlinabs}
\begin{cases}
\displaystyle \frac{d}{dt}U(t)=AU(t),\\
U(0)=\big(\zeta_0, z_0(\cdot,\cdot,-\rho h)\big),
\end{cases}
\end{equation}
where $A=A_0+B$, with $D(A)=D(A_0)$, 
\[
A_0(\zeta,z)
=
\Big(
-\alpha\partial_x^3 \zeta-\gamma\partial_x\partial_y^2\zeta-a(x,y)\zeta
-b(x,y)(\xi \zeta+z(\cdot,\cdot,1)),
\ -h^{-1}\partial_\rho z
\Big),
\]
and $B(\zeta,z)=(\xi b(x,y)\zeta,0)$. System~\eqref{eq:KPlinabs} admits a classical solution (see Proposition~\ref{pr:ZKdis}).

Let $(e^{A_0 t})_{t\ge0}$ denote the $C_0$-semigroup generated by $A_0$.
To prove exponential stability of~\eqref{eq:P} with $f=0$, we introduce the Lyapunov functional as a perturbation of the energy associated with the system, 
\[
\mathcal{V}(t)=\mathcal{E}_\zeta(t)+\eta V_1(t)+\sigma V_2(t),
\]
where $\eta,\sigma>0$ will be suitably chosen as
\[
V_1(t)=\int_\Omega x\,\zeta^2\,dxdy,\qquad \text{and} \qquad
V_2(t)=\frac{h}{2}\int_\Omega\!\int_0^1 (1-\rho)b(x,y)\zeta^2(x,y,t-\rho h)\,d\rho\,dxdy .
\]
It follows that
$$
\mathcal{E}_\zeta(t)\le \mathcal{V}(t)\le 
\Big( 1+\max\{2\eta L,\sigma/\xi\} \Big)\mathcal{E}_\zeta(t).
$$

The following results establish the exponential decay of the perturbed system.

\begin{proposition}\label{pr:Pes1}
Let $L>0$, and assume $a,b\in L^\infty(\Omega)$ are nonnegative with 
$b\ge b_0>0$ in an open subset $\omega\subset\Omega$. 
Let $\xi>1$. Then, for any initial data 
$(\zeta_0,z_0(\cdot,\cdot,-h(\cdot)))\in\mathcal H$, 
the energy~\eqref{eq:Pen} decays exponentially:
\[
\mathcal{E}_\zeta(t)\le \kappa\,\mathcal{E}_\zeta(0)e^{-2\theta t}, \qquad \forall t>0,
\]
for some $\theta,\kappa>0$, where
\[
\theta<\min\Big\{
\tfrac{3\alpha\eta}{(1+2\eta L)L^2},
\ \tfrac{\sigma}{2h(\xi+\sigma)}
\Big\},
\qquad
\kappa=1+\max\{2\eta L,\sigma/\xi\},
\]
and $\eta,\sigma>0$ satisfy
\[
\sigma=\xi-1-2L\eta(1+2\xi),
\qquad 
\eta<\frac{\xi-1}{2L(1+2\xi)}.
\]
\end{proposition}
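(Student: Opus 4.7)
The plan is to show that $\tfrac{d}{dt}\mathcal{V}(t)+2\theta\mathcal{V}(t)\le 0$ along classical solutions of \eqref{eq:P} with $f=0$, and then invoke Gronwall together with the equivalence $\mathcal{E}_\zeta\le\mathcal{V}\le\kappa\mathcal{E}_\zeta$ already recorded in the excerpt to obtain the stated decay. A density argument using Proposition~\ref{pr:ZKdis} extends the identity from $D(A)$ to $\mathcal{H}$.

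I would compute $\tfrac{d}{dt}\mathcal{E}_\zeta$, $\tfrac{d}{dt}V_1$, $\tfrac{d}{dt}V_2$ separately. The first is the energy identity already displayed above the statement, obtained by multiplying the equation by $\zeta$, integrating by parts in $(x,y)$, and applying $|\int b\zeta\zeta(t-h)|\le\tfrac12\int b\zeta^2+\tfrac12\int b\zeta^2(t-h)$. For $V_1$, multiply the equation by $2x\zeta$ and integrate: two integrations by parts in $x$ (using $\zeta=0$ at $x=0,L$ and $\partial_x\zeta(L,y)=0$) give the Morawetz identity $\int_\Omega x\zeta\,\partial_x^3\zeta\,dxdy=\tfrac{3}{2}\int_\Omega(\partial_x\zeta)^2\,dxdy$; integrating by parts first in $y$ (using $\zeta(x,0)=\zeta(x,L)=0$) and then in $x$ (using $\partial_y\zeta(L,y)=0$) yields $\int_\Omega x\zeta\,\partial_x\partial_y^2\zeta\,dxdy=\tfrac{1}{2}\int_\Omega(\partial_y\zeta)^2\,dxdy$. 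The resulting cross term $-2\int xb\zeta(\xi\zeta+\zeta(t-h))$ is handled by the elementary Young bound $|2\zeta(\xi\zeta+\zeta(t-h))|\le(2\xi+1)\zeta^2+\zeta^2(t-h)$, followed by $x\le L$ on the nonnegative $b$-integrals, leading to $\tfrac{d}{dt}V_1\le-3\alpha\int(\partial_x\zeta)^2+(2\xi+1)L\int b\zeta^2+L\int b\zeta^2(t-h)$. For $V_2$, the transport equation $h\partial_t z+\partial_\rho z=0$ and integration by parts in $\rho$ (using $z|_{\rho=0}=\zeta$) yield $\tfrac{d}{dt}V_2=\tfrac12\int b\zeta^2-\tfrac12\int\!\!\int_0^1 b\zeta^2(t-\rho h)\,d\rho\,dx\,dy$.

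Assembling $\tfrac{d}{dt}\mathcal{V}+2\theta\mathcal{V}$ and discarding the nonpositive boundary traces and the $-\int a\zeta^2$ terms, the matter reduces to ensuring the nonpositivity of the coefficients in front of $\int(\partial_x\zeta)^2$, $\int b\zeta^2$, $\int b\zeta^2(t-h)$, and the distributed integral $\int\!\!\int_0^1 b\zeta^2(t-\rho h)\,d\rho$. For the first, I use Poincaré in $x$ to absorb $\theta\int\zeta^2+2\theta\eta\int x\zeta^2\le\theta(1+2\eta L)\int\zeta^2\le\theta(1+2\eta L)L^2\int(\partial_x\zeta)^2$ into $-3\alpha\eta\int(\partial_x\zeta)^2$, requiring $\theta(1+2\eta L)L^2\le 3\alpha\eta$. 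The remaining three conditions read $\tfrac{\sigma}{2}+(2\xi+1)\eta L\le\tfrac{\xi-1}{2}$, $\ \eta L\le\tfrac{\xi-1}{2}$, and $-\tfrac{\sigma}{2}+\theta h(\xi+\sigma)\le 0$, where the last uses $(1-\rho)\le 1$ to control the contribution of $\sigma V_2$ to $2\theta\mathcal{V}$. The explicit choices $\sigma=\xi-1-2L\eta(1+2\xi)$ and $\eta<\tfrac{\xi-1}{2L(1+2\xi)}$ make the second condition an equality, render the third automatic, and leave precisely the two bounds $\theta<\tfrac{3\alpha\eta}{(1+2\eta L)L^2}$ and $\theta<\tfrac{\sigma}{2h(\xi+\sigma)}$ stated in the proposition. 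A Gronwall-type integration then gives $\mathcal{V}(t)\le\mathcal{V}(0)e^{-2\theta t}$, and $\mathcal{E}_\zeta\le\mathcal{V}$, $\mathcal{V}(0)\le\kappa\mathcal{E}_\zeta(0)$ conclude the proof.

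The only real bookkeeping obstacle is tracking the factor $(2\xi+1)$ that arises from the cross term $-2\int xb\zeta\zeta(t-h)$ in $\tfrac{d}{dt}V_1$; this factor is what dictates the sharp smallness window $\eta<\tfrac{\xi-1}{2L(1+2\xi)}$. Using a weighted Young inequality with parameter $\xi$ instead would cancel the $\xi\zeta^2$-contribution of $-2\xi\int xb\zeta^2$ but worsen the coefficient in front of $\zeta^2(t-h)$, so the unweighted split is preferable. A minor technical point is that all multiplier manipulations are first performed for $U_0\in D(A)$ and then extended to $\mathcal{H}$ by density, which is allowed because the map $U_0\mapsto U$ is continuous on $\mathcal{H}$ by Proposition~\ref{pr:ZKreg}.
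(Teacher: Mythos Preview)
Your proof is correct and follows the same Lyapunov strategy as the paper: differentiate $\mathcal{E}_\zeta$, $V_1$, $V_2$, assemble $\tfrac{d}{dt}\mathcal{V}+2\theta\mathcal{V}$, use Poincar\'e in $x$ to absorb $\theta(1+2\eta L)\int\zeta^2$ into $-3\alpha\eta\int(\partial_x\zeta)^2$, and check that the stated choices of $\sigma,\eta,\theta$ make all remaining coefficients nonpositive. Your computation $\tfrac{d}{dt}V_2=\tfrac12\int b\zeta^2-\tfrac12\int\!\!\int_0^1 b\,\zeta^2(t-\rho h)\,d\rho\,dx\,dy$ (with the distributed integral rather than the endpoint value $\zeta^2(t-h)$) is in fact the correct one for the weighted functional with the factor $(1-\rho)$, and you rightly omit any Gagliardo--Nirenberg estimate since the proposition concerns the linear case $f=0$; apart from a harmless slip in labeling which of your three coefficient conditions becomes an equality, the argument is complete.
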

\begin{proof}
	We decompose the total energy associated with~\eqref{eq:P} by introducing
\begin{equation*}
	\mathcal{E}_0(t):=\frac{1}{2}\int_\Omega \zeta^2\,dx\,dy, 
	\qquad \text{and}\qquad
	\Theta(t):=\frac{\xi h}{2}\int_\Omega\!\int_0^1 b(x,y)\zeta^2(x,y,t-\rho h)\,d\rho\,dx\,dy,
\end{equation*}
so that the energy is given by $\mathcal{E}_\zeta(t)=\mathcal{E}_0(t)+\Theta(t)$.  
We also set
\[
	V_1(t):=\int_\Omega x\,\zeta^2\,dx\,dy, 
	\qquad \text{and} \qquad
	V_2(t):=\frac{h}{2}\int_\Omega\!\int_0^1 (1-\rho)b(x,y)\zeta^2(x,y,t-\rho h)\,d\rho\,dx\,dy.
\]
Now, define the Lyapunov functional
\begin{equation}\label{JJJJJ}
	\mathcal{V}(t):=\mathcal{E}_\zeta(t)+\eta V_1(t)+\sigma V_2(t),
\end{equation}
where $\eta,\sigma>0$ will be fixed later.  
Since $0\le x\le L$ and $0\le 1-\rho\le 1$, there exist constants $c_1,c_2>0$ (depending on $\eta,\sigma,\xi,L$) such that
\begin{equation}\label{eq:Vbound}
	c_1\mathcal{E}_\zeta(t)\le \mathcal{V}(t)\le c_2\mathcal{E}_\zeta(t).
\end{equation}

\medskip

\noindent\textbf{Claim 1.} \textit{We have the following inequality 
\begin{equation}\label{VprimeZK}
	\begin{split}
		\frac{d}{dt}\mathcal{V}(t)\le {}&
		-\!\big(\alpha+3\alpha\eta-\tfrac{\alpha}{6}\eta\big)\!\int_\Omega (\partial_x\zeta)^2\,dx\,dy
		-(\gamma+\gamma\eta)\!\int_\Omega (\partial_y\zeta)^2\,dx\,dy\\
		&-\!\int_\Omega a(x,y)(1+2\eta x)\,\zeta^2\,dx\,dy\\
		&-\!\Big(\xi-1-\tfrac{\xi-1}{2}\varepsilon-\tfrac{\sigma}{2}-C_3\eta\Big)\!\int_\Omega b\,\zeta^2\,dx\,dy\\
		&-\!\Big(\tfrac{\xi-1}{2\varepsilon}-\tfrac{\sigma}{2}-C_4\eta\Big)\!\int_\Omega b\,\zeta^2(x,y,t-h)\,dx\,dy
		+\mathcal{B}_\mathcal{V}(t),
	\end{split}
\end{equation}
where
\begin{equation}\label{BBV}
	\begin{split}
		\mathcal{B}_\mathcal{V}(t)
		={}&
		-\frac{\alpha}{2}\!\int_0^L\!\big((\partial_x\zeta)^2(0,y,t)+(\partial_x\zeta)^2(L,y,t)\big)\,dy
		\\&-\frac{\gamma}{2}\!\int_0^L\!\big((\partial_y\zeta)^2(0,y,t)+(\partial_y\zeta)^2(L,y,t)\big)\,dy\\
		&+\eta L\!\int_\Omega b(x,y)\,\zeta(x,y,t)\zeta(x,y,t-h)\,dx\,dy
		+C_\eta\!\int_\Omega \zeta^2\,dx\,dy.
	\end{split}
\end{equation}}

In fact, differentiating $\mathcal{E}_0(t)$ and using~\eqref{eq:P}, integration by parts in $x$ and $y$ gives
\begin{equation*}
	\frac{d}{dt}\mathcal{E}_0(t)
	=-\int_\Omega a\,\zeta^2\,dx\,dy
	-\xi\int_\Omega b\,\zeta^2\,dx\,dy
	-\int_\Omega b\,\zeta\,\zeta(t-h)\,dx\,dy
	+\mathcal{B}_E(t),
\end{equation*}
where the boundary dissipation term $\mathcal{B}_E(t)$ is
\begin{equation*}
	\mathcal{B}_E(t)
	= -\frac{\alpha}{2}\!\int_0^L\!\big((\partial_x\zeta)^2(0,y,t)+(\partial_x\zeta)^2(L,y,t)\big)\,dy
	-\frac{\gamma}{2}\!\int_0^L\!\big((\partial_y\zeta)^2(0,y,t)+(\partial_y\zeta)^2(L,y,t)\big)\,dy,
\end{equation*}
and hence $\mathcal{B}_E(t)\le0$ under the imposed boundary conditions.  
Furthermore,
\[
	\frac{d}{dt}\Theta(t)
	=-\xi\int_\Omega b\,\zeta^2\,dx\,dy+\xi\int_\Omega b\,\zeta\,\zeta(t-h)\,dx\,dy,
\]
which leads to
\begin{equation*}
	\frac{d}{dt}\mathcal{E}_\zeta(t)
	= -\int_\Omega a\,\zeta^2\,dx\,dy
	-(\xi-1)\!\int_\Omega b\,\zeta^2\,dx\,dy
	-(\xi-1)\!\int_\Omega b\,\zeta\,\zeta(t-h)\,dx\,dy
	+\mathcal{B}_E(t).
\end{equation*}
Multiplying~\eqref{eq:P} by $x \zeta$ and integrating over $\Omega$, we obtain
\begin{equation*}
\begin{split}
	\frac{d}{dt}V_1(t)
	=& -3\alpha\int_\Omega (\partial_x\zeta)^2\,dx\,dy
	-\gamma\int_\Omega (\partial_y\zeta)^2\,dx\,dy
	\\&-2\int_\Omega x\,a\,\zeta^2\,dx\,dy
	-2\int_\Omega x\,b\,\zeta\,\zeta(t-h)\,dx\,dy
	+\mathcal{R}_1(t),
	\end{split}
\end{equation*}
where $\mathcal{R}_1(t)$ is the nonlinear remainder.  
The cubic term is handled via Gagliardo--Nirenberg and Young inequalities, ensuring that for sufficiently small $\eta>0$,
\[
	\eta \mathcal{R}_1(t)\le \frac{\alpha\eta}{6}\int_\Omega(\partial_x \zeta)^2\,dx\,dy + C_\eta\int_\Omega \zeta^2\,dx\,dy.
\]
Differentiating $V_2(t)$ yields
\[
	\frac{d}{dt}V_2(t)
	=\frac{1}{2}\int_\Omega b\,\zeta^2\,dx\,dy
	-\frac{1}{2}\int_\Omega b\,\zeta^2(x,y,t-h)\,dx\,dy.
\]
Young’s inequality gives, for all $\varepsilon>0$,
\[
	\Big|\int_\Omega b\,\zeta\,\zeta(t-h)\,dx\,dy\Big|
	\le \frac{\varepsilon}{2}\!\int_\Omega b\,\zeta^2\,dx\,dy
	+\frac{1}{2\varepsilon}\!\int_\Omega b\,\zeta^2(x,y,t-h)\,dx\,dy.
\]
Summing the derivatives of $\mathcal{E}_\zeta(t)$, $\eta V_1(t)$, and $\sigma V_2(t)$, we arrive at \eqref{VprimeZK}.

\medskip

\noindent\textbf{Claim 2.} \textit{The Claim 1. ensures that
\begin{equation}\label{PTI}
\mathcal{E}_\zeta(t)\le \kappa \mathcal{E}_\zeta(0)e^{-2\theta t},\qquad \forall t\ge0.
\end{equation}}

Indeed, note that all boundary contributions of \eqref{BBV} are nonpositive, and the remaining integrals are controlled by the coercive terms in~\eqref{VprimeZK}. Choosing $\varepsilon>0$ so that $\tfrac{\xi-1}{2}\varepsilon<\tfrac{\xi-1}{4}$, then selecting $\eta>0$ sufficiently small, and finally fixing $\sigma>0$ such that
\[
	\xi-1-\tfrac{\xi-1}{4}-\tfrac{\sigma}{2}-C_3\eta>0,
	\qquad
	\tfrac{\xi-1}{2\varepsilon}-\tfrac{\sigma}{2}-C_4\eta>0,
\]
all coefficients in~\eqref{VprimeZK} become positive. Hence, there exists $\theta>0$ such that
\[
	\frac{d}{dt}\mathcal{V}(t)+2\theta \mathcal{V}(t)\le0.
\]
Integrating in time and using~\eqref{eq:Vbound} yields \eqref{PTI}, concluding the proof.
\end{proof}

The next result shows that the energy \eqref{eq:ZKen} associated with the system \eqref{eq:P} with appropriate source term $f$ decays exponentially. 
\begin{proposition}\label{pr:KPde} 
Under the same assumptions on $a$ and $b$, 
there exists $\delta>0$ such that, if 
$\|b\|_{L^\infty(\Omega)}\le\delta$, 
then the energy associated with the full nonlinear system~\eqref{eq:ZK} 
decays exponentially. 
More precisely, for every initial datum 
$(\zeta_0,z_0(\cdot,\cdot,-h(\cdot)))\in\mathcal H$,
there exist constants $C,\theta>0$ such that
\[
\mathcal{E}_\zeta(t)\le C\,\mathcal{E}_\zeta(0)e^{-2\theta t}, \qquad \forall t>0.
\]
\end{proposition}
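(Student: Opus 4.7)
The strategy is to run the same Lyapunov argument as in Proposition~\ref{pr:Pes1}, but applied directly to the original system~\eqref{eq:ZK} rather than to the perturbed system~\eqref{eq:P}, and to control carefully the single sign-indefinite contribution that arises from the absence of the $\xi b\zeta$ cancellation on the left-hand side. Concretely, I would introduce the Lyapunov functional
\[
\mathcal V(t) := \mathcal E_\zeta(t) + \eta V_1(t) + \sigma V_2(t),
\]
where $\mathcal E_\zeta$ is given by \eqref{eq:ZKen} and $V_1$, $V_2$, $\eta$, $\sigma$, $\xi$ are chosen exactly as in the proof of Proposition~\ref{pr:Pes1}, and then differentiate $\mathcal V$ along trajectories of \eqref{eq:ZK}.

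Repeating the multiplier computations of that proof verbatim, the only algebraic difference is that the $-\xi\!\int_\Omega b\zeta^2\,dx\,dy$ contribution produced in Proposition~\ref{pr:Pes1} by the $\xi b\zeta$ term on the left-hand side of \eqref{eq:P} is now missing. After applying Young's inequality to the cross term $b\zeta\zeta(t-h)$, invoking the Gagliardo--Nirenberg absorption of the cubic remainder $\mathcal R_1$ already performed in Proposition~\ref{pr:Pes1}, and using the equivalence \eqref{eq:Vbound}, one obtains an estimate of the form
\[
\frac{d}{dt}\mathcal V(t) + 2\theta\,\mathcal V(t) \;\le\; C_1\bigl(1+\eta L+\tfrac{\sigma}{2}\bigr)\!\int_\Omega b(x,y)\,\zeta^2(x,y,t)\,dx\,dy \;\le\; C_2\,\|b\|_\infty\,\mathcal V(t),
\]
for explicit constants $C_1, C_2$ depending only on $\xi$, $L$, $\eta$ and $\sigma$. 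The sign-indefinite term is therefore fully controlled by the smallness parameter $\|b\|_\infty$ alone, with no condition on the initial data entering the bound.

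Setting $\delta := \theta/C_2$, any choice $\|b\|_\infty\le\delta$ yields
\[
\frac{d}{dt}\mathcal V(t) + \theta\,\mathcal V(t) \le 0, \qquad t>0,
\]
whence $\mathcal V(t)\le \mathcal V(0)\,e^{-\theta t}$ by Gronwall, and a second application of \eqref{eq:Vbound} transfers this decay to the energy defined in \eqref{eq:ZKen} in the form $\mathcal E_\zeta(t)\le C\,\mathcal E_\zeta(0)\,e^{-2\theta' t}$ with $\theta':=\theta/2$ and $C$ determined by the equivalence constants $c_1,c_2$.

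The step I expect to require the most care is the Gagliardo--Nirenberg absorption of the lower-order contribution $C_\eta\!\int_\Omega\zeta^2\,dx\,dy$ that results from the cubic remainder $\mathcal R_1$ in the derivative of $\eta V_1$: it must be swallowed by the coercive damping $\int_\Omega a(x,y)(1+2\eta x)\zeta^2\,dx\,dy$ together with the boundary-dissipation contributions, exactly as in Proposition~\ref{pr:Pes1}. Because that absorption is already carried out there without any smallness restriction on the initial datum, it transfers verbatim to the present setting; the proof of this proposition is then a genuine perturbation of that of Proposition~\ref{pr:Pes1}, differing only by the extra term $C_2\|b\|_\infty\,\mathcal V$ which the hypothesis $\|b\|_\infty\le\delta$ is precisely designed to absorb.
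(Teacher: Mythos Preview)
Your route differs from the paper's. Rather than rerunning the Lyapunov computation directly on~\eqref{eq:ZK}, the paper regards the generator for~\eqref{eq:ZK} as $A=A_0+B$ with $B(\zeta,z)=(\xi b\zeta,0)$ a bounded perturbation of the dissipative operator $A_0$ underlying~\eqref{eq:P}. It then splits $\zeta=v+w$, where $v$ evolves under the exponentially stable semigroup $e^{A_0 t}$ of Proposition~\ref{pr:Pes1} and $w$ carries the source $\xi b v$, and combines a Duhamel estimate with an iteration on intervals of length $T_0$ to obtain $\mathcal E_\zeta(mT_0)\le\mu^m\mathcal E_\zeta(0)$ for some $\mu\in(0,1)$ once $\|b\|_\infty$ is small. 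Crucially, this argument is carried out at the \emph{linear} level; the nonlinearity $\zeta\partial_x\zeta$ is not treated in this proposition but only later, in the proof of Theorem~\ref{th:ZKes}, under a smallness hypothesis on the data.

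There is a genuine gap in your handling of the cubic remainder. The Gagliardo--Nirenberg bound on $\int_\Omega|\zeta|^3$ produces a \emph{quartic} lower-order term,
\[
\tfrac{\eta}{3}\!\int_\Omega|\zeta|^3\,dx\,dy \;\le\; \varepsilon\bigl(\|\partial_x\zeta\|_{L^2}^2+\|\partial_y\zeta\|_{L^2}^2\bigr)+C(\varepsilon,\eta)\,\|\zeta\|_{L^2(\Omega)}^4,
\]
not the quadratic $C_\eta\int_\Omega\zeta^2$ you quote. A term of size $C\|\zeta\|_{L^2}^4$ cannot be absorbed by $-3\alpha\eta\int(\partial_x\zeta)^2$ (even after Poincar\'e) or by the localized damping $\int_\Omega a(x,y)(1+2\eta x)\zeta^2$ (recall $a\ge a_0>0$ only on $\omega\subsetneq\Omega$, so this does not dominate $\int_\Omega\zeta^2$) with constants independent of the solution; one is forced to assume $\|\zeta(t)\|_{L^2}$ small, hence small initial data. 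This is exactly why the direct Lyapunov method in Theorem~\ref{th:MUlyes} yields only a \emph{local} result under $\|(\zeta_0,z_0)\|_{\mathcal H}\le r$. Your assertion that Proposition~\ref{pr:Pes1} already performs this absorption ``without any smallness restriction'' misreads that proof: the proposition is stated and applied for the linear problem $f=0$, in which there is no cubic remainder (the mention of a ``nonlinear remainder'' in its proof is a drafting artifact and is never actually absorbed there). Your argument therefore delivers at best a local decay estimate, not the data-independent statement claimed.
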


\begin{proof}
	Let $v$ be the solution of~\eqref{eq:P} with $f=0$ and initial condition 
	$v(x,y,0)=\zeta_{0}(x,y)$, together with the delayed relation 
	$z^{1}(1)=\zeta(x,y,t-h)$, where $z^{1}$ satisfies
	\begin{equation*}
		\begin{cases}
			h\,\partial_tz^{1}(x,y,\rho,t)+\partial_{\rho}z^{1}(x,y,\rho,t)=0, 
			& (x,y)\in\Omega,\ \rho\in(0,1),\ t>0,\\[0.3em]
			z^{1}(x,y,0,t)=v(x,y,t), & (x,y)\in\Omega,\ t>0,\\[0.3em]
			z^{1}(x,y,\rho,0)=v(x,y,-\rho h)=z_{0}(x,y,-\rho h), 
			& (x,y)\in\Omega,\ \rho\in(0,1).
		\end{cases}
	\end{equation*}
	Next, define $w$ as the solution of~\eqref{eq:P} with source term 
	$f=\xi\,b(x,y)\,v(x,y,t)$ and zero initial data $w(x,y,0)=0$, 
	subject to
	\begin{equation*}
		\begin{cases}
			h\,\partial_tz^{2}(x,y,\rho,t)+\partial_{\rho}z^{2}(x,y,\rho,t)=0, 
			& (x,y)\in\Omega,\ \rho\in(0,1),\ t>0,\\[0.3em]
			z^{2}(x,y,0,t)=w(x,y,t), & (x,y)\in\Omega,\ t>0,\\[0.3em]
			z^{2}(x,y,\rho,0)=0, & (x,y)\in\Omega,\ \rho\in(0,1).
		\end{cases}
	\end{equation*}
	
	Set $\zeta=v+w$ and $z=z^{1}+z^{2}$. 
	Then $(\zeta,z)$ satisfies the complete delayed system~\eqref{eq:ZK} 
	together with the compatibility condition $z(1)=\zeta(x,y,t-h)$.  
	The component $v$ describes the dissipative part of the dynamics, 
	whereas $w$ accounts for the delayed feedback induced by $b(x,y) \zeta(x,y,t-h)$.
	By Proposition~\ref{pr:Pes1}, the delay-free subsystem for $v$ is exponentially stable; 
	there exist $\kappa>0$ and $\theta>0$ such that
$$
		\mathcal{E}_{v}(t)\le\kappa\,e^{-2\theta t}\mathcal{E}_{v}(0), \qquad t\ge0.
$$
	Hence, using the Duhamel representation for $\zeta=v+w$, we obtain
$$
		\mathcal{E}_{\zeta}(t)\le 
		\kappa e^{-2\theta t}\mathcal{E}_{\zeta}(0)
		+\int_{0}^{t}\kappa e^{-2\theta (t-s)}F(s)\,ds,
$$
	where $F(s)$ denotes the contribution of the feedback terms.  
	From the energy identity applied to $w$, one deduces that
	\begin{equation}\label{eq:ineq-v}
		\frac{d}{dt}\mathcal{V}(t)+2\theta \mathcal{V}(t)
		\le C_{1}\|b\|_{L^{\infty}(\Omega)}^{2}
		\sup_{\tau\in[t-h,t]}\mathcal{E}_{\zeta}(\tau),
	\end{equation}
	for some constant $C_{1}>0$ depending only on $\xi$, $\eta$, $\sigma$, and the domain.
	
	Integrating~\eqref{eq:ineq-v} and setting 
	$M(t):=\sup_{0\le s\le t}\mathcal{E}_{\zeta}(s)$, 
	for $0\le t\le T_{0}$ we obtain
	\[
	\mathcal{E}_{\zeta}(t)\le 
	\kappa e^{-2\theta t}\mathcal{E}_{\zeta}(0)
	+\frac{C_{1}\kappa\|b\|_{L^{\infty}(\Omega)}^{2}}{2\theta}(1-e^{-2\theta t})M(t).
	\]
	Taking the supremum over $t\in[0,T_{0}]$ yields
	\begin{equation}\label{eq:Mt0}
		M(T_{0})\le 
		\kappa \mathcal{E}_{\zeta}(0)
		+\frac{C_{1}\kappa\|b\|_{L^{\infty}(\Omega)}^{2}}{2\theta}(1-e^{-2\theta T_{0}})M(T_{0}).
	\end{equation}
	
	If
$$
		\frac{C_{1}\kappa\|b\|_{L^{\infty}(\Omega)}^{2}}{2\theta}
		(1-e^{-2\theta T_{0}})<1,
$$
	then inequality~\eqref{eq:Mt0} gives
	\[
	M(T_{0})\le 
	\frac{\kappa}{1-\frac{C_{1}\kappa\|b\|_{L^{\infty}(\Omega)}^{2}}{2\theta}
		(1-e^{-2\theta T_{0}})}\,\mathcal{E}_{\zeta}(0).
	\]
	Consequently, for sufficiently small $\|b\|_{L^{\infty}(\Omega)}\le\delta$,
	we deduce that $$\mathcal{E}_{\zeta}(T_{0})\le \mu \mathcal{E}_{\zeta}(0),$$ for some $\mu\in(0,1)$, 
	which implies
	\[
	\mathcal{E}_{\zeta}(mT_{0})\le\mu^{m}\mathcal{E}_{\zeta}(0), \qquad m\in\mathbb{N}.
	\]
	
	Finally, for any $t>0$ there exists $m\in\mathbb{N}$ and $s\in[0,T_{0})$ 
	such that $t=mT_{0}+s$, leading to
	\[
	\mathcal{E}_{\zeta}(t)\le C\,\mathcal{E}_{\zeta}(0)e^{-2\theta t},
	\]
	where $C>0$ depends only on $\xi$, $\eta$, $\sigma$, and $\kappa$. 
	This completes the proof.
\end{proof}

\subsection{Optimal local stabilization: Proof of Theorem \ref{th:ZKes}} With the exponential decay estimate obtained in Proposition~\ref{pr:KPde}, we now establish the local exponential stabilization result for the delayed system~\eqref{eq:ZK}.
	
	Under Assumption~\ref{A1}, system~\eqref{eq:ZK} is well posed in $\mathcal H$. 
	Applying Gronwall’s inequality to the delayed energy functional yields
$$
		\big\|
		(\zeta(\cdot,\cdot,t),\zeta(\cdot,\cdot,t-h(\cdot)))
		\big\|_{\mathcal H}^{2}
		\le e^{2\xi\|b\|_{\infty}t}
		\big\|
		(\zeta_{0},z_{0}(\cdot,\cdot,-h(\cdot)))
		\big\|_{\mathcal H}^{2}.
$$
	Hence, for every $T>0$,
	$$
		\|\zeta\|_{C([0,T];L^{2}(\Omega))} 
		\le e^{\xi\|b\|_{\infty}T}
		\|(\zeta_{0},z_{0}(\cdot,\cdot,-h(\cdot)))\|_{\mathcal H},
	$$
	and
	$$
		\|\zeta\|_{L^{2}(0,T;L^{2}(\Omega))} 
		\le T^{1/2}e^{\xi\|b\|_{\infty}T}
		\|(\zeta_{0},z_{0}(\cdot,\cdot,-h(\cdot)))\|_{\mathcal H}.$$
		
		Now, we prove that 
			\begin{equation}\label{eq:BH_bound}
		\|\zeta\|_{B_{H}}^{2}
		\le
		\widetilde{\mathcal K}
		\Big(
		1+T e^{2\|b\|_{\infty}T}
		+T e^{\frac{10}{3}\|b\|_{\infty}T}
		+e^{2\|b\|_{\infty}T}
		\Big)\mathcal{E}_{\zeta}(0),
	\end{equation}
	where
	\[
	\widetilde{\mathcal K}
	=
	\frac{1}{
		\min\{1,\frac{3\alpha}{2},\frac{\gamma}{2}\}}
	\left(
	\frac{L}{2}
	+L(\|a\|_{\infty}+\|b\|_{\infty})
	+\frac{1}{4}\!\left(\frac{C L}{\varepsilon}\right)^{4/3}
	\right).
	\]

\medskip

\noindent{\textbf{\textit{Proof of \eqref{eq:BH_bound}}.}} Multiplying equation~\eqref{eq:ZK} by $x\,\zeta(x,y,t)$ and integrating over 
	$\Omega\times(0,T)$,  using the boundary conditions, we obtain
	\begin{equation}\label{eq:KP_energy_identity}
		\begin{aligned}
			\frac{3\alpha}{2}\!\int_{0}^{T}\!\!\int_{\Omega}(\partial_x\zeta)^{2}\,dx\,dy\,dt
			&+\frac{\gamma}{2}\!\int_{0}^{T}\!\!\int_{\Omega}(\partial_y\zeta)^{2}\,dx\,dy\,dt
			 \le  \int_{0}^{T}\!\!\int_{\Omega}|\zeta|^{3}\,dx\,dy\,dt\\
			&+
			\Big(
			\frac{L}{2}
			+L(\|a\|_{\infty}+\|b\|_{\infty})T e^{2\xi\|b\|_{\infty}T}
			\Big)
			\|(\zeta_{0},z_{0}(\cdot,\cdot,-h(\cdot)))\|_{\mathcal H}^{2}\\
		\end{aligned}
	\end{equation}
	
	\medskip
	\noindent
	To estimate the nonlinear term, we employ the Gagliardo–Nirenberg inequality 
	(see Friedman~\cite[Theorem 10.1, p.~27]{Fr}),
	\begin{equation}\label{eq:GN_ZK}
		\|\zeta\|_{L^{3}(\Omega)} 
		\le C\,\|\zeta\|_{H^{1}(\Omega)}^{1/3}\|\zeta\|_{L^{2}(\Omega)}^{2/3},
	\end{equation}
	which, upon integration, gives
	\[
	\int_{\Omega}|\zeta|^{3}\,dx\,dy
	\le \frac{\varepsilon^{4}}{4}\|\zeta\|_{H^{1}(\Omega)}^{2}
	+\frac{3}{4}\!\left(\frac{C L}{\varepsilon}\right)^{4/3}\!
	\|\zeta\|_{L^{2}(\Omega)}^{10/3}.
	\]
	Assuming that the initial energy satisfies $E_{u}(0)\le1$, 
	and combining~\eqref{eq:KP_energy_identity}–\eqref{eq:GN_ZK}, we obtain \eqref{eq:BH_bound}.
	
	\medskip
Now, we are in a position to prove the stabilization result, precisely
	\begin{equation}\label{eq:local_exp_decay}
		\mathcal{E}_{\zeta}(t)\le C e^{-\gamma t}\mathcal{E}_{\zeta}(0), 
		\qquad t>T_{\min},
	\end{equation}
	where $C,\gamma>0$ depend only on $T_{\min},\xi,L$, and $h$. Here, $T_{\min}$ is given by \eqref{tmin}. Hence, system~\eqref{eq:ZK} is locally exponentially stable for all sufficiently small feedback amplitudes $\|b\|_{\infty}\le\delta$ and small initial data.
	
		\medskip
		
	\noindent{\textbf{\textit{Proof of \eqref{eq:local_exp_decay}.}}} Let the initial data satisfy
	\[
	\|(\zeta_{0},z_{0}(\cdot,\cdot,-h(\cdot)))\|_{\mathcal H}\le r,
	\]
	for some small $r>0$ to be determined.  
	Decompose the solution as 
	$\zeta=\zeta^{1}+\zeta^{2}$, 
	where: $\zeta^{1}$ solves the \emph{linear delayed system} associated with~\eqref{eq:ZK}, 
		i.e.\ the linearization of~\eqref{eq:ZK} around the origin, with 
		$\zeta^{1}(x,y,0)=\zeta_{0}(x,y)$ and delay variable $z_{0}(x,y,t)$;
	 $\zeta^{2}$ solves the \emph{nonlinear remainder system}, containing all nonlinear terms (in particular, $\zeta\partial_x\zeta$), with homogeneous initial and boundary conditions.

	Fix $\mu\in(0,1)$ and let $T_{0}>0$ be as in Proposition~\ref{pr:KPde}.  
	By the contraction argument based on the semigroup generated by the linear part of~\eqref{eq:ZK}, 
	there exists $T_{1}>T_{0}$ such that
$$
		e^{(2\|b\|_{\infty}+\nu)T_{0}-\nu T_{1}}<\frac{\mu}{2},
		\qquad
		\text{with}\quad
		\nu=\frac{1}{T_{0}}\ln\!\Big(\frac{1}{\mu+\varepsilon}\Big).
$$
	Consequently, the linear component satisfies
	\begin{equation}\label{eq:linear_decay}
		\mathcal{E}_{\zeta^{1}}(T_{1})\le\frac{\mu}{2}\mathcal{E}_{\zeta^{1}}(0).
	\end{equation}
	Combining~\eqref{eq:linear_decay} with estimates on $\zeta^{2}$ obtained via~\eqref{eq:GN_ZK}, we find
	\begin{equation}\label{eq:final_energy_step}
	\begin{split}
		\mathcal{E}_{\zeta}(T_{1})
		&\le \mu \mathcal{E}_{\zeta}(0)
		+\|(\zeta^{2}(\cdot,\cdot,T_{1}),\zeta^{2}(\cdot,\cdot,T_{1}-h(\cdot)))\|_{\mathcal H}^{2}\\
		&\le \mu \mathcal{E}_{\zeta}(0)
		+e^{(1+3\xi)T_{1}}\|\zeta\partial_x\zeta\|_{L^{1}(0,T_{1};L^{2}(\Omega))}^{2}\\
		&\le \mu \mathcal{E}_{\zeta}(0)
		+C_{1}^{2}C_{2}^{2} e^{(1+3\xi)T_{1}}T_{1}^{1/2}\|\zeta\|_{\mathcal B_{X}}^{4}\\
		&\le (\mu+\mathcal R)\mathcal{E}_{\zeta}(0),
	\end{split}
	\end{equation}
	where
	\[
	\mathcal R
	=e^{(1+3\xi)T_{1}}
	C_{1}^{2}C_{2}^{2}T_{1}^{1/2}(1+L^{2})^{2}
	\widetilde{\mathcal K}^{2}
	\Big(
	1+T_{1}e^{2\|b\|_{\infty}T_{1}}
	+T_{1}e^{\frac{10}{3}\|b\|_{\infty}T_{1}}
	+e^{2\|b\|_{\infty}T_{1}}
	\Big)^{2}r.
	\]
	
	\medskip
	\noindent
	Select $\varepsilon>0$ so that $\mu+\varepsilon<1$, 
	and choose $r>0$ sufficiently small to ensure
	\begin{equation}\label{eq:r_condition}
		r<
		\frac{\varepsilon}{
			e^{(1+3\xi)T_{1}}
			C_{1}^{2}C_{2}^{2}T_{1}^{1/2}(1+L^{2})^{2}
			\widetilde{\mathcal K}^{2}
			\Big(
			1+T_{1}e^{2\|b\|_{\infty}T_{1}}
			+T_{1}e^{\frac{10}{3}\|b\|_{\infty}T_{1}}
			+e^{2\|b\|_{\infty}T_{1}}
			\Big)^{2}}.
	\end{equation}
	Then, from~\eqref{eq:final_energy_step} and~\eqref{eq:r_condition},
	\[
	\mathcal{E}_{\zeta}(T_{1})\le(\mu+\varepsilon)\mathcal{E}_{\zeta}(0), \qquad \mu+\varepsilon<1.
	\]
	Iterating this contraction estimate over successive intervals of length $T_{1}$ yields \eqref{eq:local_exp_decay}. The proof of Theorem~\ref{th:ZKes} is thus complete. \qed

%%%%%%%%%%%%%%%%%%%%%%%%%%%%%%%%%%%%%%%%
%%%%%%%%%%%%%%%%%%%%%%%%%%%%%%%%%%%%%%%%

\section{\texorpdfstring{$\mu_i$}{}-system: Stability results}\label{Sec3} The main objective of this section is to prove the local and global exponential stability for the solutions of \eqref{eq:MU} using two different approaches. 

\subsection{Local stabilization: Lyapunov approach} In this subsection, we prove the local exponential stabilization of system~\eqref{eq:MU} by introducing a suitable Lyapunov functional. The method combines the dissipation from the delayed term with refined energy estimates to control the nonlinear effects. This yields an explicit decay rate for the energy and is essential for understanding the full stabilization of the $\mu_i$–system.
\begin{proof}[Proof of Theorem \ref{th:MUlyes}] Let $\zeta$ be the solution to system~\eqref{eq:ZK} with coefficients chosen according to the $\mu_i$--configuration introduced in~\eqref{eq:MU}.  
	The corresponding total energy is defined in~\eqref{eq:MUen}, and we assume that the delay–damping parameter $\xi>0$ satisfies the constraint~\eqref{eq:MUcond}.
	
	Following the Lyapunov approach used in the proof of Proposition~\ref{pr:Pes1}, we define the basic energy components
	\[
	\mathcal{E}_0(t)
	:=\frac{1}{2}\int_\Omega \zeta^2(x,y,t)\,dx\,dy,
	\qquad
	\Phi(t)
	:=\frac{\xi}{2}\int_\Omega\!\!\int_0^1 a(x,y)\zeta^2(x,y,t-\rho h)\,d\rho\,dx\,dy,
	\]
	so that $\mathcal{E}_\zeta(t)=\mathcal{E}_0(t)+\Phi(t)$.  
	We further introduce the auxiliary functionals
	\[
	V_1(t)
	:=\int_\Omega x\,\zeta^2(x,y,t)\,dx\,dy,
	\qquad
	V_2(t)
	:=\frac{h}{2}\int_\Omega\!\!\int_0^1(1-\rho)\,a(x,y)\zeta^2(x,y,t-\rho h)\,d\rho\,dx\,dy,
	\]
	and the Lyapunov functional
	\[
	\mathcal{V}(t):=\mathcal{E}_\zeta(t)+\eta V_1(t)+\sigma V_2(t),
	\]
	where $\eta,\sigma>0$ are parameters to be fixed below. Since $0\le x\le L$ and $0\le 1-\rho\le 1$, there exist constants $c_1,c_2>0$, depending on $\eta,\sigma,\xi$, and $L$, such that
	\begin{equation}\label{eq:V-bounds}
		c_1\mathcal{E}_\zeta(t)\le \mathcal{V}(t)\le c_2\mathcal{E}_\zeta(t).
	\end{equation}
	
	\medskip
\noindent\textbf{Claim.} \textit{The exponential decay of $\mathcal{V}(t)$ implies exponential decay of $\mathcal{E}_\zeta(t)$.}
	
	\medskip

Indeed, differentiating $\mathcal{E}_0(t)$ and using equation~\eqref{eq:MU}, we obtain
\begin{equation*}
\begin{split}
	\frac{d}{dt}\mathcal{E}_0(t)
	=&\int_\Omega \zeta\,\partial_t \zeta\,dx\,dy
	=-\alpha\!\int_\Omega \zeta\,\partial_x^3\zeta\,dx\,dy
	-\gamma\!\int_\Omega \zeta\,\partial_x\partial_y^2\zeta\,dx\,dy
	\\&-\!\int_\Omega a(x,y)\big(\mu_1\zeta^2+\mu_2\zeta\,\zeta(t-h)\big)\,dx\,dy.
\end{split}
\end{equation*}
	Integration by parts in $x$ and $y$, taking into account the homogeneous boundary conditions, gives
	\begin{equation}\label{eq:E0primeMU}
	\begin{split}
		\frac{d}{dt}\mathcal{E}_0(t)
		=&-\frac{\alpha}{2}\int_0^L(\partial_x\zeta(0,y,t))^2\,dy
		-\frac{\gamma}{2}\int_0^L(\partial_y\zeta(0,y,t))^2\,dy
		\\&-\!\int_\Omega a(x,y)\big(\mu_1\zeta^2+\mu_2\zeta\,\zeta(t-h)\big)\,dx\,dy.
		\end{split}
	\end{equation}
	
	\medskip
	\noindent
	The identity corresponding to $\Phi(t)$ reads
	\[
	\frac{d}{dt}\Phi(t)
	=\frac{\xi}{2h}\int_\Omega a(x,y)\big(\zeta^2(x,y,t)-\zeta^2(x,y,t-h)\big)\,dx\,dy.
	\]
	Adding this expression to~\eqref{eq:E0primeMU}, we obtain
$$
		\begin{aligned}
			\frac{d}{dt}\mathcal{E}_\zeta(t)
			&=-\frac{\alpha}{2}\int_0^L(\partial_x\zeta(0,y,t))^2\,dy
			-\frac{\gamma}{2}\int_0^L(\partial_y\zeta(0,y,t))^2\,dy\\
			&\quad
			-\!\int_\Omega a(x,y)
			\Big(
			\Big(\mu_1-\frac{\xi}{2h}\Big)\zeta^2
			+\mu_2\zeta\,\zeta(t-h)
			+\frac{\xi}{2h}\zeta^2(x,y,t-h)
			\Big)dx\,dy.
		\end{aligned}
$$
	Under the condition~\eqref{eq:MUcond}, the quadratic form in $(\zeta,\zeta(t-h))$ 
	is positive definite, and hence the energy dissipation is strictly positive.

	We now multiply~\eqref{eq:MU} by $x\,\zeta(x,y,t)$ and integrate over $\Omega$.  
	A direct integration by parts yields
	\begin{equation}\label{eq:V1primeMU}
	\begin{split}
		\frac{d}{dt}V_1(t)
		=&-3\alpha\int_\Omega (\partial_x\zeta)^2\,dx\,dy
		-\gamma\int_\Omega (\partial_y\zeta)^2\,dx\,dy\\
		&-2\int_\Omega x\,a(x,y)\big(\mu_1\zeta^2+\mu_2\zeta\,u(t-h)\big)\,dx\,dy
		+R_1(t),
		\end{split}
	\end{equation}
	where $R_1(t)$ collects nonlinear (cubic) and small boundary terms.  To estimate the $R_1(t)$ we use the Gagliardo--Nirenberg inequality 	due to Friedman~\cite[Theorem 10.1, p.~27]{Fr},
$$
		\|\zeta\|_{L^3(\Omega)}\le C\|\zeta\|_{H^1(\Omega)}^{1/3}\|\zeta\|_{L^2(\Omega)}^{2/3}.
$$
	Hence,
$$
		\int_\Omega |\zeta|^3\,dx\,dy
		\le C^3\|\zeta\|_{H^1(\Omega)}\|\zeta\|_{L^2(\Omega)}^2
		\le \varepsilon\!\left(\|\partial_x\zeta\|_{L^2}^2+\|\partial_y\zeta\|_{L^2}^2\right)
		+ C_\varepsilon\|\zeta\|_{L^2}^4,
$$
	for any $\varepsilon>0$ and a constant $C_\varepsilon>0$ depending on $\varepsilon$ and $C$.
	Therefore, in~\eqref{eq:V1primeMU},
	\[
	\eta R_1(t)\le 
	\eta\varepsilon\!\left(\|\partial_x\zeta\|_{L^2}^2+\|\partial_y\zeta\|_{L^2}^2\right)
	+\eta C_\varepsilon\|\zeta\|_{L^2}^4.
	\]
	
	\medskip
	\noindent
	A straightforward computation for $V_2(t)$ gives
	\[
	\frac{d}{dt}V_2(t)
	=\frac{1}{2}\int_\Omega a(x,y)\zeta^2(x,y,t)\,dx\,dy
	-\frac{1}{2}\int_\Omega a(x,y)\zeta^2(x,y,t-h)\,dx\,dy.
	\]

	Adding the time derivatives of $\mathcal{E}_\zeta(t)$, $\eta V_1(t)$, and $\sigma V_2(t)$,
	and collecting similar terms, we obtain
	\begin{equation}\label{eq:VprimeMU}
		\begin{split}
			\frac{d}{dt}\mathcal{V}(t)
			\le {}&
			-\big(\alpha+3\alpha\eta-C_2\eta\big)\!\int_\Omega (\partial_x\zeta)^2\,dx\,dy
			-\big(\gamma+\gamma\eta-C_2\eta\big)\!\int_\Omega(\partial_y\zeta)^2\,dx\,dy\\
			&-\!\int_\Omega a(x,y)\zeta^2\!\Big(1+2\eta x-\tfrac{\sigma}{2}\Big)\,dx\,dy
			\\&-\!\Big(\tfrac{\xi\sigma}{2h(\xi+\sigma)}-C_3\eta\Big)
			\!\int_\Omega a(x,y)\zeta^2(x,y,t-h)\,dx\,dy\\
			&-C_4\!\int_\Omega a(x,y)\big(\zeta(x,y,t)-\zeta(x,y,t-h)\big)^2\,dx\,dy
			+\eta C_\varepsilon\|\zeta\|_{L^2(\Omega)}^4,
		\end{split}
	\end{equation}
	where $C_2,C_3,C_4>0$ depend only on $\alpha,\gamma,\mu_1,\mu_2,h$, and $\|a\|_\infty$. The last term in~\eqref{eq:VprimeMU} is nonlinear and can be controlled if the initial data are small in $\mathcal H$, that is,
	\[
	\|(\zeta_0,z_0(\cdot,\cdot,-h(\cdot)))\|_{\mathcal H}\le r.
	\]
	In this case, $\|\zeta(t)\|_{L^2(\Omega)}^2\le r^2$ for small times, and the quartic term is dominated by the dissipative ones provided
	\begin{equation}\label{eq:r-small}
		\eta C_\varepsilon r^2
		\le \tfrac{1}{2}\min\!\big\{
		\alpha+3\alpha\eta-C_2\eta,\,
		\gamma+\gamma\eta-C_2\eta
		\big\}.
	\end{equation}
	A sufficient explicit condition ensuring~\eqref{eq:r-small} is given by \eqref{RRRRR}. 
	%\[
	%r<\frac{\sqrt[4]{216\alpha^3}}{CL^{5/2}},
	%\]
	%where $C>0$ is an absolute constant. 
Picking $\eta,\sigma>0$ satisfying the hypothesis given in the theorem, we ensure that all coefficients in~\eqref{eq:VprimeMU} are strictly positive. Then there exists $\theta>0$ such that
	\[
	\frac{d}{dt}\mathcal{V}(t)+2\theta \mathcal{V}(t)\le0.
	\]
	Using~\eqref{eq:V-bounds}, we obtain
	\[
	\mathcal{E}_\zeta(t)\le\kappa \mathcal{E}_\zeta(0)e^{-2\theta t},\qquad\forall t>0,
	\]
showing the claim and the proof of Theorem~\ref{th:MUlyes} is achieved.
	\end{proof}

\subsection{Global stabilization: Compactness-uniqueness method} In this subsection, we establish the global exponential stabilization using the classical compactness–uniqueness method of Lions. Assuming the failure of the observability inequality, we construct a sequence of normalized solutions, so the unique continuation arguments then lead to a contradiction, completing the proof.

\begin{proof}[Proof of Theorem \ref{th:MUes}] First, observe that
\begin{equation}\label{eq:MUobs1}
\begin{split}
T\left\lVert{\zeta_0}\right\rVert_{L^2(\Omega)}^2 
\leq& \left\lVert{\zeta}\right\rVert_{L^2(0,T,L^2(\Omega))}^2 
- \alpha T \int_0^T\int_0^L (\partial_x\zeta(0,y,t))^2\,dy\,dt 
\\&+ \gamma T \int_0^T\int_0^L 
	\left(%
		{ \partial_y\zeta(0,y,t)}
	\right)^2\,dy\,dt\\&+T(2\mu_1+\mu_2) \int_0^T\int_0^L\int_0^L  a(x,y) \zeta^2(x,y,t)\,dx\,dy\,dt \\
&+ T\int_0^T\int_0^L\int_0^L  a(x,y) \mu_2 \zeta^2(x,y,t-h)\,dx\,dy\,dt
\end{split}
\end{equation}
Moreover,  multiplying \eqref{eq:ZKlin1} by $a(x,y)\xi z(x,y,\rho,s)$, integrating in $\Omega\times(0,1)\times(0,T)$ and taking in account that $z(x,y,\rho,t)=\zeta(x,y,t-\rho h)$ we obtain
\begin{equation}\label{eq:MUobs2}
\begin{split}
&\int_0^L\int_0^L\int_0^1 a(x,y)z^2(x,\rho,0)\,d\rho\,dx\,dy \leq \frac{1}{hT}\int_0^T\int_0^L\int_0^L a(x,y) \zeta^2(x,y,t)\,dx\,dy\,dt\\
&+ \left(%
	\frac{1}{Th}+\frac{1}{h}
	\right)
\int_0^T\int_0^L\int_0^L a(x,y) \zeta^2(x,y,t-h)\,dx\,dy\,dt
\end{split}
\end{equation}

As is classical in control theory, Theorem \ref{th:MUes} is a consequence of the following observability inequality
\begin{equation}\label{eq:MUobs}
\begin{aligned}
\mathcal{E}_\zeta(0)  \leq& \mathcal{C}
	\left(
		 \int_0^T\int_0^L (\partial_x\zeta(0,y,t))^2\,dy
		+\int_0^T\int_0^L(\partial_y\zeta(0,y,t))^2\,dy\,dt
	\right.\\
	&
	\left.%
		+\int_0^T\int_{\Omega} a(x,y)(\zeta^2(x,y,t-h)+\zeta^2(x,y,t)\,dx\,dy\,dt%
	\right),
\end{aligned}
\end{equation}
with $C$ a positive constant.

Putting together \eqref{eq:MUobs1} and \eqref{eq:MUobs2}, we see that the observability inequality \eqref{eq:MUobs} holds showing that for any $T$ and $R>0$, there exists $K:=K(R,T)>0$ such that
	\begin{equation}\label{eq:ZKobs}
	\begin{split}
				\|\zeta\|_{L^2(0,T;L^2(\Omega))}^2
			\le&K\Big(
			\int_0^T\int_0^L |\partial_x \zeta(0,y,t)|^2\,dy\,dt
			+ \int_0^T\int_0^L |\partial_y \zeta(0,y,t)|^2\,dy\,dt\\
			&+\int_0^T\int_\Omega a(x,y)\left(\zeta^2(x,y,t)+\zeta^2(x,y,t-h)\right)\,dx\,dy\,dt
			\Big),
		\end{split}
		\end{equation}
is verified for all sufficiently regular solutions of the delayed ZK system
	\[
	\partial_t\zeta+\alpha \partial_x^3\zeta+\gamma \partial_x\partial_y^2\zeta+a(x,y)\zeta+b(x,y)(\xi \zeta+\zeta(t-h))=0,
	\]
	with initial data $\|(\zeta_0,z_0(\cdot,\cdot,-h(\cdot)))\|_{\mathcal H}\le R$.

	\medskip
\noindent\textbf{\textit{Proof of \eqref{eq:ZKobs}}.} Suppose, by contradiction, that \eqref{eq:ZKobs} does not occur. Then there exists a sequence $(\zeta^n)_n\subset\mathcal{B}_X$ of solutions with initial data satisfying $\|(\zeta_0^n,z_0^n(\cdot,\cdot,-h(\cdot)))\|_{\mathcal H}\le R$ such that
	\[
	\lim_{n\to\infty}\frac{\|\zeta^n\|_{L^2(0,T;L^2(\Omega))}^2}{B(\zeta^n)}=+\infty,
	\]
	where
	\begin{equation*}
		\begin{split}
			B(\zeta^n)
			=& \int_0^T\int_0^L|\partial_x\zeta^n(0,y,t)|^2\,dy\,dt
			+\int_0^T\int_0^L|\partial_y\zeta^n(0,y,t)|^2\,dy\,dt\\
			&+\int_0^T\int_\Omega a(x,y)\Big(|\zeta^n(x,y,t)|^2+|\zeta^n(x,y,t-h)|^2\Big)\,dx\,dy\,dt.
		\end{split}
	\end{equation*}
	
	Let $\lambda_n=\|\zeta^n\|_{L^2(0,T;L^2(\Omega))}$ and set
	\[
	v^n(x,y,t)=\frac{\zeta^n(x,y,t)}{\lambda_n}.
	\]
	Then $v^n$ satisfies the normalized system
	\begin{equation*}
		\begin{cases}
			\partial_tv^n+\alpha \partial^3_xv^n+\gamma \partial_x\partial^2_xv^n+a(x,y)v^n+b(x,y)(\xi v^n+v^n(t-h))=0,\\[0.3em]
			v^n(0,y,t)=v^n(L,y,t)=\partial_xv^n(L,y,t)=\partial_yv^n(L,y,t)=0,\\[0.3em]
		         v^n(x,0,t)=v^n(x,L,t)=0,\\[0.3em]
			v^n(x,y,0)=\frac{u_0^n}{\lambda_n}(x,y),\quad v^n(x,y,t)=\frac{z_0^n}{\lambda_n}(x,y,t),\\[0.3em]
			\|v^n\|_{L^2(0,T;L^2(\Omega))}=1,
		\end{cases}
	\end{equation*}
	and $B(v^n)\to0$ as $n\to\infty$.

	From \eqref{eq:MUobs1} and the normalization, we have
	\[
	\|v^n(\cdot,\cdot,t)\|_{L^2(\Omega)}^2
	\le \frac{1}{T}\|v^n\|_{L^2(0,T;L^2(\Omega))}^2 + C\,B(v^n)
	\le \frac{1}{T}+C\,B(v^n),
	\]
	so $\big(v^n(\cdot,\cdot,0)\big)_n$ is bounded in $L^2(\Omega)$.  
	A similar estimate for the delayed term shows that $$\big(\sqrt{a(x,y)}v^n(\cdot,\cdot,-h(\cdot))\big)_n$$ is bounded in $L^2(\Omega\times(0,1))$. Moreover, $(\lambda_n)_n$ is bounded. Thanks to the regularity result (analogous to Proposition \ref{pr:ZKreg}), the sequence $(v^n)_n$ is bounded in $L^2(0,T;H^1(\Omega))$. Using a Gagliardo-Nirenberg inequality, we deduce that $(v^n \partial_xv^n)_n$ is bounded in $L^2(0,T;L^1(\Omega))$.

	Because $v^n\in L^2(0,T;H^1(\Omega))$, we have $\partial_xv^n\in L^2(0,T;L^2(\Omega))$. Using Cauchy–Schwarz,
	\[
	\|\partial_x\partial_y^2v^n\|_{L^2(0,T;H^{-3}(\Omega))}
	\le C\|v^n\|_{L^2(0,T;L^2(\Omega))},
	\]
	and thus $(\partial_x\partial_y^2v^n)_n$ is bounded in $L^2(0,T;H^{-3}(\Omega))$. Consequently, 
	\[
	\partial_tv^n=-\alpha \partial^3_xv^n-\gamma \partial_x\partial^2_yv^n-\lambda_n v^n \partial_xv^n - a(x,y)(\mu_1v^n+\mu_2v^n(t-h))
	\]
	is bounded in $L^2(0,T;H^{-3}(\Omega))$.  
	Hence, by the Aubin–Lions compactness theorem \cite{Simon}, the sequence $(v^n)_n$ is relatively compact in $L^2(0,T;L^2(\Omega))$.  
	Therefore, there exists a subsequence (still denoted $v^n$) such that
$$
		v^n\to v\quad\text{strongly in }L^2(0,T;L^2(\Omega)),
		\qquad \|v\|_{L^2(0,T;L^2(\Omega))}=1.
$$
	By weak lower semicontinuity and the convergence of boundary traces implied by $B(v^n)\to0$, we have
\begin{equation}\label{rrrr-uc}
		v(x,y,t)=0\ \text{in}\ \omega\times(0,T),\qquad
		\partial_xv(0,y,t)=0\ \text{for}\ y\in(0,L),\ t\in(0,T).
\end{equation}
	Moreover, since $(\lambda_n)_n$ is bounded, we can extract a subsequence such that $\lambda_n\to\lambda\ge0$.  
	
	Passing to the limit in the weak formulation of the equation satisfied by $v^n$, we find that $v$ solves
	\begin{equation*}
		\begin{cases}
			\partial_tv+\alpha \partial^3_x v+\gamma \partial_x\partial^2_yv+a(x,y)(\mu_1v+\mu_2v(t-h))+\lambda v \partial_xv=0,\\[0.3em]
			v(0,y,t)=v(L,y,t)= \partial_xv(L,y,t)=\partial_yv(L,y,t)=0,\\[0.3em]
			v(x,0,t)=v(x,L,t)=0,\\[0.3em]
			\|v\|_{L^2(0,T;L^2(\Omega))}=1.
		\end{cases}
	\end{equation*}
If $\lambda = 0$, then $v$ satisfies a linear homogeneous ZK equation with homogeneous boundary conditions, and \eqref{rrrr-uc}. By Holmgren’s uniqueness theorem, this implies that $v\equiv 0$ in $\Omega\times(0,T)$, which contradicts the normalization $\|v\|_{L^{2}(0,T;L^{2}(\Omega))}=1$.  On the other hand, if $\lambda>0$, then $v\in L^{2}(0,T;H^{3}(\Omega))$ and satisfies the homogeneous equation in the whole domain. Applying the unique continuation properties available for the ZK equation (see \cite[Theorem~1.1]{Pa1}, \cite[Theorem~1.1]{Roger2025}, or \cite[Theorem~1.1]{Chen}), we again conclude that $v\equiv 0$ in $\Omega\times(0,T)$, leading to the same contradiction. 

\vspace{0.2cm}

Therefore, \eqref{eq:ZKobs} holds, and consequently, the observability inequality \eqref{eq:MUobs} is also verified. Thus, the theorem is proved. 	
\end{proof}

\section{Conclusion and novelty of the article}\label{Sec4} 

%In this work, we treated the stabilization of the ZK equation in a bounded domain using two different approaches. he first main contribution of this manuscript (Theorem \ref{th:ZKes}) shows that the solutions of system \eqref{eq:ZK} are locally exponentially stabilized whenever the delayed feedback weight is sufficiently small, a result that, remarkably, holds for every length $L>0$, unlike the instantaneous damping models found in the existing literature. Our second and third contributions address the reduced $\mu_i$-system \eqref{eq:MU}, where the dynamics are reformulated through the parameters $\mu_{1}$ and $\mu_{2}$. In this framework, we obtain both local (Theorem \ref{th:MUlyes}) and global (Theorem \ref{th:MUes}) exponential stabilization results, and in the local case, we provide the explicit decay rate that stabilizes the ZK system.

In this work, we established exponential stabilization for the ZK equation on bounded domains through two complementary approaches. First, we proved that system \eqref{eq:ZK} is locally exponentially stabilized for all lengths $L>0$ when the delay weight is sufficiently small. Then, by reformulating the dynamics via the parameters $\mu_1$ and $\mu_2$, we obtained both local and global exponential stabilization for the reduced system \eqref{eq:MU}, including an explicit decay rate in the local case. In contrast with the stabilization mechanisms commonly found in the literature, our problem introduces additional and genuinely new challenges. We summarize below the main aspects that distinguish our setting from the previously cited works.

\subsection{Energy structure under delayed feedback} The state variable is coupled with a distributed delay feedback, so the damping term depends not only on the present state but also on the history of the solution. This brings to the equation a combination of damping and delayed effects, absent in the classical ZK stabilization problems studied so far. As a result, the associated energy functional exhibits a mixed structure involving both present and delayed components, and its time derivative no longer has a definite sign, preventing a straightforward conclusion about the decay of the energy.
\subsection{Stability issues caused by time-delayed} Unlike the results of Dieme, de Moura and Santos \cite{Roger2025} for the n-dimensional ZK equation with localized damping and those of de Moura, Nascimento and Santos \cite{ailton2021} for high-order KP and ZK equations, the presence of delay drastically changes the qualitative behavior of the dynamics: delayed feedback, if not treated with appropriate weights, may destabilize the system or prevent exponential decay of the natural energy. Establishing stability in this setting, therefore, demands new estimates that are not present in previous works.

\subsection{Sharp decay estimates and multidimensional extensions} Compared with the results in \cite{ailton2021} and \cite{Roger2025}, it is important to emphasize that our work provides an explicit decay rate. In both theorems, we obtain optimal local stabilization, giving not only the precise decay rate but also the exact time from which the exponential behavior becomes effective.

Finally, our analysis can be naturally extended to other evolution equations. In particular, thanks to the unique continuation theory developed by Dieme, de Moura, and Santos~\cite{Roger2025}, the methods introduced in our manuscript can be adapted to several multidimensional dispersive models. Among them, we highlight the three-dimensional Kawahara equation
\[
\partial_t v + \partial_x\left( v +  \Delta v - \partial_x^4 v +\frac{1}{2}v^2\right)= 0, 
\qquad (x,y,z,t)\in(0,L)\times(0,L)\times(0,L)\times(0,+\infty),
\]
and the three-dimensional Kadomtsev--Petviashvili type equation
\[
\partial_x\left( \partial_t v + \alpha\,\partial_x^3 v + \beta\,\partial_x^5 v + v\,\partial_x v \right)
+ (\partial_y^2  + \partial_z^2) v = 0,
\qquad (x,y,z,t)\in(0,L)\times(0,L)\times(0,L)\times(0,+\infty),
\]
where $\alpha, \beta \in \mathbb{R}$. For additional background and further developments on these systems, we refer the reader to~\cite{LaSi,SaTz,SaTz1} and the references therein.

\subsection*{Data availability statement} Data sharing does not apply to the current paper as no data were generated or analyzed during this study.

\subsection*{Acknowledgment} This work was carried out during several visits between the authors at the Federal University of Pernambuco and the Federal University of Piauí. The authors gratefully acknowledge the warm hospitality provided by both institutions.

\end{document}